\newcommand{\footremember}[2]{%
    \footnote{#2}
    \newcounter{#1}
    \setcounter{#1}{\value{footnote}}%
}
\newcommand{\footrecall}[1]{%
    \footnotemark[\value{#1}]%
}
\newtheorem{lem}{Lemma}
\newtheorem{thm}{Theorem}
\newtheorem{defn}{Definition}
\newcommand{\red}[1]{\textcolor{red}{#1}}
\newcommand{\secref}[1]{Section~\ref{#1}}
\newcommand{\figref}[1]{Figure~\ref{#1}}
\newcommand{\defref}[1]{Definition~\ref{#1}}
\newcommand{\tabref}[1]{Table~\ref{#1}}
\newcommand{\thmref}[1]{Theorem~\ref{#1}}
\newcommand{\lemref}[1]{Lemma~\ref{#1}}
\newcommand{\reals}{{\mathbb{R}}\xspace} 
\newcommand{\mcP}{{\mathbb{P}}\xspace} 
\newcommand{\eg}{{\it e.g.}\xspace}          
\newcommand{\cf}{{\it cf.}\xspace}           
\newcommand{\bChar}[1]{\textbf{{#1}}} 
\newcommand{\bSym}[1]{\boldsymbol{{#1}}} 
\newcommand{\Time}{<\text{Time}>}
\title{An $H^1$-Conforming Solenoidal Basis for Velocity Computation on Powell-Sabin Splits for the Stokes Problem}
\author{Jeffrey M. Connors\footremember{uconn}{University of Connecticut, 341 Mansfield Road, Storrs, CT, USA} \and Michael Gaiewski\footrecall{uconn} \footnote{Corresponding author.  E-mail: michael.gaiewski@uconn.edu}}
\date{}
\begin{document}

\maketitle

\abstract{A solenoidal basis is constructed to compute velocities using a certain finite element method for the Stokes problem.  The method is conforming, with piecewise linear velocity and piecewise constant pressure on the Powell-Sabin split of a triangulation.  Inhomogeneous Dirichlet conditions are supported by constructing an interpolating operator into the solenoidal velocity space.  The solenoidal basis reduces the problem size and eliminates the pressure variable from the linear system for the velocity.  A basis of the pressure space is also constructed that can be used to compute the pressure after the velocity, if it is desired to compute the pressure.  All basis functions have local support and lead to sparse linear systems.  The basis construction is confirmed through rigorous analysis.  Velocity and pressure system matrices are both symmetric, positive definite, which can be exploited to solve their corresponding linear systems.  Significant efficiency gains over the usual saddle-point formulation are demonstrated computationally.}

\section{Introduction}\label{sec:intro} 
This paper relates to finite element computations for the incompressible Stokes problem in two dimensions.  
Given the real, simply connected and polygonal domain $\Omega \subset \reals^2$, we consider the Dirichlet problem to solve for velocity $\bChar{u} :\overline{\Omega}\to \reals^2$ and pressure $p:\Omega \to \reals$ such that 
\begin{align}
    -\nu \Delta \bChar{u} +\nabla p &= \bChar{f} \quad \text{on} \ \Omega , \label{eqn:pde} \\ 
    \nabla \cdot \bChar{u} &= 0 \quad \text{on} \ \Omega , \label{eqn:incompressible} \\ 
    \bChar{u} &= \bChar{g} \quad \text{on} \ \partial \Omega , \label{eqn:bcs} \\
    \text{and} \ \int_\Omega p \, d \mathbf{x} &= 0 ,
    \label{eqn:mv}
\end{align}
where $\nu>0$ is a constant viscosity parameter and $\bChar{g}$ is a target set of boundary values for the velocity field.  
Bold font will be reserved for vectors and spaces of vector-valued functions.  

There are countless research expositions related to the Stokes problem since it connects to many scientific models and problems in mathematics.  
Here, we are primarily interested in the computation of velocity and pressure variables using a certain finite element method where the incompressibility constraint~\eqref{eqn:incompressible} is ultimately satisfied pointwise over the domain, yielding a true solenoidal velocity field.  
In contrast, many methods only satisfy this condition in some weak (integrated) sense, and special mixed pairings of elements for the velocity and pressure are generally needed for strong incompressibility.  
A motivation is that solenoidal velocities break a coupling in the consistency error between velocity and pressure variables, so that velocity computation is not polluted by errors that should only affect the pressure accuracy.  
In the literature, this property is called {\it pressure robustness}.  
A review of methods may be found in~\cite{SIAMRev2017}.  

These methods turn out to carry an additional potential benefit: to construct a locally-supported and solenoidal basis directly for the velocity space.  
Whereas a certain saddle-point problem is typically solved for the velocity and pressure variables, a solenoidal basis can be used to express the linear system in a block-triangular fashion that allows the velocity to be computed without computing the pressure.  If the pressure is desired, it can then be calculated after via a separate solve, but in both cases the saddle-point solve is replaced by smaller, symmetric positive-definite systems.  
Few methods of this type exist at present, and it is not clear that a solenoidal basis can always have a local support.  
There are methods of discontinuous-Galerkin (DG) type~\cite{HL2008,MFH2008,MX2017}.  Also, for Raviart-Thomas (RT), Brezzi-Douglass-Marini (BDM) and hybridized locally-DG mixed elements that possess a weak divergence but not a weak gradient, see~\cite{CCS2006,WWY2009,RCSR2018,YE2021}.  
In the $H^1$-conforming case there is method with fourth-order polynomial velocities~\cite{ParkARXIV} and a method with first-order velocities~\cite{QS2007}.  

The purpose of this paper is to develop the decoupled velocity and pressure computations for the mixed pair in~\cite{Fabien_2022}.  
The method enforces solenoidal, first-order polynomial velocities and is $H^1$-conforming.  
The element order is the same as the method in~\cite{QS2007}, but the finite element mesh is quite different.  
We use Powell-Sabin splits that subdivide triangles of a fairly general mesh into six subtriangles (detailed later), whereas the meshes of~\cite{QS2007} use rectangular meshes and subdivide each rectangle into four triangles.  
This latter meshing approach may be less convenient for some applications.  
Besides the geometry, the method of this paper allows for a local macro-element assembly requiring only the six local triangles of the Powell-Sabin split at once, grouping nodal basis functions for four nodes.   
The method of~\cite{QS2007} refers to macro-element basis functions over a nine-rectangle grid of four triangles per rectangle, hence thirty-six triangles, and ultimately groups thirteen nodal functions together.  

We summarize the paper contents as follows.  
Details of the Powell-Sabin mesh and finite element method for Stokes are given in~\secref{sec:femStokes}, along with some preliminary technical lemmas.  
This includes a discussion about handling Dirichlet boundary conditions.  
In~\secref{sec:divFree} we construct a solenoidal basis for the finite element velocity space with local support that can accommodate Dirichlet conditions.  
The properties are proved, and we include details of the local construction for implementation.  
In~\secref{sec:pressure} we construct a local basis of the pressure space such that each basis function is the divergence of a known basis function in the (non-solenoidal) discrete velocity space, with proof.  
This pressure basis requires a subset of vertices to be marked using a graph-theoretic {\it spanning tree}, for which purpose there is already an inexpensive algorithm due to Kruskal~\cite{kruskal1956}.  
The pressure basis is fairly simple to implement once this is done.  
Computational examples are given in~\secref{sec:comps} comparing the velocity and pressure computations to the classical saddle-point system, the latter using the usual non-solenoidal basis for velocity.  
A summary discussion is provided in~\secref{sec:summary}.

\section{A finite element method on Powell-Sabin splits}\label{sec:femStokes} 
This section provides some mathematical preliminaries and then outlines the finite element spaces studied, focusing on a specific instance from amongst those discussed in~\cite{Fabien_2022}.  

\subsection{Preliminaries}\label{sec:prelims} 
Given any countable set $S$, we denote by $|S|$ its cardinality, whereas for sets $S\subset \Omega$, $|S|$ denotes the geometric area.  
Also, for $S\subset \Omega$ we refer to the function spaces
\begin{align*}
    L^2 (S) &:= \left\{ f:S\to \reals \, | \, \| f\|^2 = \int_S |f|^2\, d\bChar{x} <\infty \right\} \\ 
       L^2_0 (S) &:= \left\{ f \in L^2 (S) \, | \, \int_S f \, d\bChar{x} =0 \right\}  \\ 
       H^k (S) &:= \left\{ f:S\to \reals \, | \, \| f\|_k^2 = \sum_{|\alpha|\leq k} \int_S |D^{\alpha} f|^2\, d\bChar{x} <\infty \right\} \\ 
              H^k_0 (S) &:= \left\{ f\in H^k (S) \, | \, D^{\alpha}f|_{\partial \Omega}=0,\ \forall \alpha \, \text{with}\, |\alpha| < k \right\} 
\end{align*}
Here, $\bChar{x} = (x,y)$ are coordinates in $S\subset\Omega$.  
Generally, vectors and their components have the form $\bChar{f} = (f_1 , f_2)$.  
Vector-valued spaces use bold font as well: 
\[
\bChar{L}^2 (S) = \left[ L^2(S)\right]^2,\ \bChar{H}^k (S) = \left[ H^k (S) \right]^2 , \ \text{and} \ 
\bChar{H}^k_0 (S) = \left[ H^k_0 (S) \right]^2 .
\] 
In order to treat the Dirichlet boundary conditions~\eqref{eqn:bcs}, let $\gamma_0 :\bChar{H}^1 (\Omega)\to \bChar{H}^{1/2}(\partial \Omega)$ be the classical trace operator. 
We also use the divergence-free subspaces 
\begin{align*}
       \bChar{V} (S) &:= \left\{ \bChar{v} \in \bChar{H}^1 (S) \, | \, \nabla \cdot \bChar{v} =0 \, \text{on}\, \Omega \right\}  \\ 
             \bChar{V}_0 (S) &:= \left\{ \bChar{v} \in \bChar{H}^1_0 (S) \, | \, \nabla \cdot \bChar{v} =0 \, \text{on}\, \Omega \right\} .
\end{align*}

Dot-product and inner-product notations:  
\begin{align*}
\bChar{f} \cdot \bChar{g} &= f_1g_1 +f_2g_2, \ \forall \bChar{f}\in \reals^2,\ \bChar{g}\in \reals^2 \\ 
    \left( f,g \right) &= \int_\Omega f(\bChar{x})g(\bChar{x})\, d\bChar{x},\ \forall f\in L^2 (\Omega) ,\ g\in L^2 (\Omega) \\ 
    \left( \bChar{f},\bChar{g} \right) &= \int_\Omega \bChar{f}(\bChar{x})\cdot \bChar{g}(\bChar{x})\, d\bChar{x},\ \forall \bChar{f}\in \bChar{L}^2 (\Omega) ,\ \bChar{g}\in \bChar{L}^2 (\Omega) \\ 
    \nabla \bChar{f}(\bChar{x}) : \nabla \bChar{g}(\bChar{x}) &= \sum_{i=1,2} \nabla f_i (\bChar{x}) \cdot \nabla g_i(\bChar{x}),\ \forall \bChar{f}\in \bChar{H}^1 (\Omega) ,\ \bChar{g}\in \bChar{H}^1 (\Omega) \\ 
    \left( \nabla \bChar{f},\nabla \bChar{g} \right) &= \int_\Omega \nabla \bChar{f}(\bChar{x}):\nabla \bChar{g}(\bChar{x})\, d\bChar{x},\ \forall \bChar{f}\in \bChar{H}^1 (\Omega) ,\ \bChar{g}\in \bChar{H}^1 (\Omega) .
\end{align*}


\subsection{Finite element method}\label{sec:FEM} 
Let $\tau_h^M = \{ T_k^M \}$ be a family of conforming triangulations of $\Omega$, denoting the triangular mesh elements by $T_k$ for $1\leq k \leq |\tau_h^M|$.  
Here, $h=\max_k \{ h_k \}$ where $h_k=\text{diam} (T_k^M)$ is the maximum diameter of a triangle in the mesh.  
The superscript $M$ is used to distinguish this {\it macro-mesh} from its Powell-Sabin split, the latter denoted here by $\tau_h$.  
The mesh $\tau_h$ is formed via a subdivision of each triangle $T_k^M$ into six 
smaller triangles.  

The Powell-Sabin mesh $\tau_h$ is constructed via the following procedure.  
Find the incenter of each $T_k^M$.  
This is connected via line segments to the vertices of $T_k^M$.  
Given any two adjacent triangles $T_k^M$ and $T_j^M$, their incenters are then connected via a line segment, which necessarily intersects their shared edge.  
If a triangle $T_k^M$ has an exterior edge (on $\partial \Omega$), the midpoint 
of the exterior edge is connected via a line segment to the incenter.  
An important feature of the Powell-Sabin split is the creation of {\it singular points} in the mesh.  
A singular point is any vertex such that all edges meeting at that vertex fall on precisely two straight lines.  
By construction, the mesh $\tau_h$ has precisely one singular vertex along each edge of $\tau_h^M$, and no others. 
 
Given a vertex $\bChar{z}$, it is convenient for us to use a local indexing for the set of triangles that share this vertex.  Such a set is denoted by 
\[
\tau^M_{\bChar{z}} = \left\{ T^M_1 , \ldots , T^M_{|\tau^M_{\bChar{z}}|}\right\} \subset \tau_h^M 
\]
for triangles in the macro-mesh, or else 
\[
\tau_{\bChar{z}} = \left\{ T_1 , \ldots , T_{|\tau_{\bChar{z}}|}\right\} \subset \tau_h
\] 
for the Powell-Sabin mesh.  
Throughout this paper we assume an ordering of the local triangles so that $T^M_j$ and $T^M_{j+1}$ (or $T_j$ and $T_{j+1}$) share an edge.  
Moreover, we assume that a point rotated counter-clockwise around $\bChar{z}$ that crosses the shared edge will move out of triangle $j$ and into $j+1$.  
 Graphically, the Powell-Sabin splitting is summarized in~\figref{fig:PSmesh}, along with an example of the set $\tau_{\bChar{z}}$ with $\bChar{z}$ an incenter of a triangle in $\tau^M_h$.  

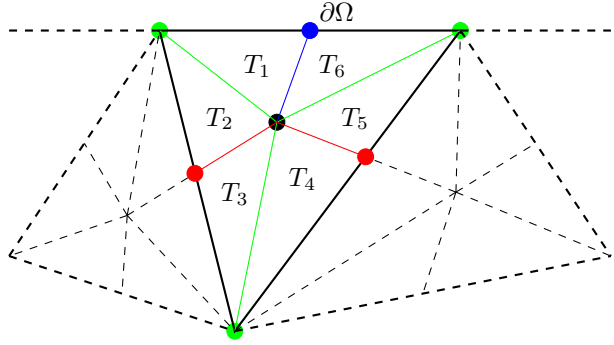
\begin{figure}
\centering 
\begin{tikzpicture}
\coordinate (A) at (0,1);
\coordinate (B) at (3,0);
\filldraw [green] (B) circle (3pt); 
\coordinate (C) at (2,4);
\filldraw [green] (C) circle (3pt); 
\coordinate (D) at (1.57,1.54);
\coordinate (AB) at (1.5,0.5); 
\coordinate (AC) at (1,2.5); 
\draw [thick, dashed] (A) -- (B);
\draw [thick, dashed] (A) -- (C);
\draw [dashed] (D) -- (A);
\draw [dashed] (D) -- (B);
\draw [dashed] (D) -- (C);
\coordinate (E) at (6,4);
\filldraw [green] (E) circle (3pt); 
\coordinate (F) at (3.56,2.78);
\filldraw [black] (F) circle (3pt); 
\draw [thick] (B) -- (E) -- (C) -- cycle;
\draw [green] (E) -- (F);
\draw [green] (B) -- (F);
\draw [green] (C) -- (F);
\coordinate (G) at (8,1);
\coordinate (H) at (5.94,1.85);
\coordinate (BE) at (4.74,2.32);
\coordinate (CE) at (4,4);
\draw [thick, dashed] (E) -- (G);
\draw [thick, dashed] (B) -- (G);
\draw [dashed] (H) -- (G);
\draw [dashed] (H) -- (B);
\draw [dashed] (H) -- (E);
\coordinate (BG) at (5.5,0.5);
\coordinate (EG) at (7,2.5);

\coordinate (BC) at (2.47,2.1);
\draw [red] (F) -- (BC);
\draw [red] (F) -- (BE);
\draw [blue] (F) -- (CE);
\draw [dashed] (D) -- (BC);
\draw [dashed] (D) -- (AB);
\draw [dashed] (D) -- (AC);
\draw [dashed] (H) -- (BG); 
\draw [dashed] (H) -- (EG); 
\draw [dashed] (H) -- (BE);
\tkzLabelPoint[above right](CE){$\partial \Omega$}
\coordinate (P) at (0,4);
\coordinate (Q) at (8,4); 
\draw [thick, dashed] (P) -- (C); 
\draw [thick, dashed] (E) -- (Q); 
\filldraw [red] (BC) circle (3pt); 
\filldraw [blue] (CE) circle (3pt); 
\filldraw [red] (BE) circle (3pt); 
\coordinate (Z1) at (3.3,3.8);
\tkzLabelPoint[below](Z1){$T_1$} 
\coordinate (Z2) at (2.8,3.1);
\tkzLabelPoint[below](Z2){$T_2$} 
\coordinate (Z3) at (3.0,2.1);
\tkzLabelPoint[below](Z3){$T_3$} 
\coordinate (Z4) at (3.9,2.3);
\tkzLabelPoint[below](Z4){$T_4$} 
\coordinate (Z5) at (4.6,3.1);
\tkzLabelPoint[below](Z5){$T_5$} 
\coordinate (Z6) at (4.3,3.8);
\tkzLabelPoint[below](Z6){$T_6$}
\end{tikzpicture}
\caption{The Powell-Sabin split illustrated for a triangle of the macro-mesh $\tau_h^M$.  The incenter $\bChar{z}$ (black dot) is connected to the vertices (green dots, edges) and neighboring incenters (red edges).  In case of an edge on the boundary, the incenter is connected to the midpoint of that edge (blue edge, dot).  The split results in six triangles, $\tau_{\bChar{z}}=\{ T_1 , \ldots , T_6 \}$, and a set of singular vertices in correspondence with the edges of $\tau_h^M$ (red and blue dots).}\label{fig:PSmesh}
\end{figure}

Some notation of use follows here, where an edge or vertex is called {\it exterior} if it lies on $\partial \Omega$, otherwise it is {\it interior}.  
\begin{itemize}
\item $E_{int}^M$ (and $E_{ext}^M$) -- interior (and exterior) edges of $\tau_h^M$ 
\item $V_{int}^M$ (and $V_{ext}^M$) -- interior (and exterior) vertices of $\tau_h^M$ 
\item $S_{int}$ (and $S_{ext}$) -- interior (and exterior) singular vertices of $\tau_h$
\item $E^M = E_{int}^M \cup E_{ext}^M$, $V^M = V_{int}^M \cup V_{ext}^M$, $S = S_{int} \cup S_{ext}$, 
\end{itemize} 
Note the cardinality relationships 
\begin{equation}
    |E_{int}^M| = |S_{int}| \quad \text{and} \quad |E_{ext}^M| = |S_{ext}| . \label{eqn:card}
\end{equation}

Given any region $S\subset \Omega$, let $\mcP_k (S)$ be the space of polynomials of order $k$ or less on $S$, with corresponding vector-valued space $\bSym{\mcP}_k (S) = [\mcP_k (S)]^2$.  Spaces needed to discuss the discrete velocity are  
\begin{align*}
    \bChar{X}_h &= \left\{ \bChar{v}_h \in [\mathcal{C} (\overline{\Omega})]^2 \ | \ \bChar{v}_h \in \bSym{\mcP}_1 (T), \ \forall T\in \tau_h \right\} \subset \bChar{H}^1 (\Omega) \\
    \bChar{X}_h^0 &= \left\{ \bChar{v}_h \in \bChar{X}_h \ | \ \bChar{v}_h =0\ \text{on}\ \partial \Omega \right\} \subset \bChar{H}^1_0 (\Omega) \\ 
    \bChar{V}_h &= \bChar{X}_h \cap \bChar{V} (\Omega) \\ 
        \bChar{V}_h^0 &= \bChar{V}_h \cap \bChar{V}_0 (\Omega)
\end{align*}

The corresponding pressure space is identified after observing a key property of the divergence of the discrete velocity (\cf~\cite{Fabien_2022}; see~\cite{SV1985} for a proof).  
\begin{lem}\label{lem:discDiv}
Let $q\in \mathcal{C}(T)$ for all $T\in \tau_h$ and define a function 
\begin{equation*}
    \theta_{\bChar{z}} (q) := \left\{ \begin{array}{ll} 
    q|_{T_1} (\bChar{z}) - q|_{T_2} (\bChar{z}) + q|_{T_3} (\bChar{z}) - q|_{T_4} (\bChar{z}), & \ \text{if} \ \bChar{z}\in S_{int} ,  \\ 
     q|_{T_1} (\bChar{z}) - q|_{T_2} (\bChar{z}) , & \ \text{if} \ \bChar{z}\in S_{ext} , 
    \end{array}\right. 
\end{equation*}
where $\tau_{\bChar{z}} = \{ T_1 , \ldots , T_4 \}$ if $\bChar{z}\in S_{int}$ or $\tau_{\bChar{z}} = \{ T_1 , T_2 \}$ if $\bChar{z}\in S_{ext}$.  
Then for any $\bChar{v}_h\in \bChar{X}_h^0$ and singular vertex $\bChar{z}\in S$ there holds $\theta_{\bChar{z}} (\nabla \cdot \bChar{v}_h )=0$.  
\end{lem}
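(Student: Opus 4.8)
The plan is to reduce the statement to a short linear-algebra computation, combining the affine structure of $\bChar{v}_h$ on each triangle with the geometry of the singular vertex. First I would write $\bChar{v}_h|_{T_i}(\bChar{x}) = A_i\bChar{x} + \bChar{c}_i$ on each local triangle $T_i\in\tau_{\bChar{z}}$, where $A_i$ is a constant $2\times 2$ Jacobian matrix and $\bChar{c}_i$ a constant vector. Since $\bChar{v}_h$ is piecewise linear, $q=\nabla\cdot\bChar{v}_h$ is piecewise constant and $q|_{T_i}(\bChar{z})=\operatorname{tr}(A_i)$. Thus in the interior case the target quantity is
\[
\theta_{\bChar{z}}(q) = \operatorname{tr}(A_1)-\operatorname{tr}(A_2)+\operatorname{tr}(A_3)-\operatorname{tr}(A_4) = \operatorname{tr}(M), \qquad M := A_1-A_2+A_3-A_4 ,
\]
while in the exterior case $\theta_{\bChar{z}}(q)=\operatorname{tr}(A_1-A_2)$. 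It therefore suffices to show that the relevant matrix, $M$ in the interior case or $A_1-A_2$ in the exterior case, is the zero matrix.

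Translate $\bChar{z}$ to the origin. The defining property of a singular vertex is that the edges of $\tau_{\bChar{z}}$ emanating from $\bChar{z}$ lie on exactly two straight lines; let $\bChar{a}$ and $\bChar{b}$ be direction vectors of these two lines, which are linearly independent. The key local observation is that $C^0$ continuity of $\bChar{v}_h$ forces equality of the affine pieces across the edge shared by consecutive triangles $T_i$ and $T_{i+1}$; evaluating at $\bChar{z}$ gives $\bChar{c}_i=\bChar{c}_{i+1}$, so the difference reduces to $(A_i-A_{i+1})\bChar{x}$, and its vanishing along the shared edge yields $(A_i-A_{i+1})\bChar{d}=\bChar{0}$ for the edge direction $\bChar{d}$. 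In the interior case the four separating edges point (up to sign) along $\bChar{b},\bChar{a},\bChar{b},\bChar{a}$ in cyclic order, giving
\[
(A_1-A_2)\bChar{b}=\bChar{0},\quad (A_2-A_3)\bChar{a}=\bChar{0},\quad (A_3-A_4)\bChar{b}=\bChar{0},\quad (A_4-A_1)\bChar{a}=\bChar{0}.
\]
Adding the first and third relations gives $M\bChar{b}=\bChar{0}$; using the second and fourth (so that $A_3\bChar{a}=A_2\bChar{a}$ and $A_4\bChar{a}=A_1\bChar{a}$) gives $M\bChar{a}=\bChar{0}$. Since $\bChar{a}$ and $\bChar{b}$ span $\reals^2$, we conclude $M=0$ and hence $\theta_{\bChar{z}}(q)=\operatorname{tr}(M)=0$.

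For the exterior case I would additionally invoke the homogeneous Dirichlet condition built into $\bChar{X}_h^0$. Here $\bChar{z}\in S_{ext}$ is the midpoint of a boundary edge, so the two edges of $\tau_{\bChar{z}}$ lying on $\partial\Omega$ are collinear with common direction $\bChar{b}$, while the edge shared by $T_1$ and $T_2$ runs from $\bChar{z}$ to the incenter with direction $\bChar{a}$; since the incenter is interior to the macro-triangle, $\bChar{a}$ and $\bChar{b}$ are again independent. Because $\bChar{v}_h=\bChar{0}$ along the two boundary half-edges and at $\bChar{z}$, the affine pieces satisfy $\bChar{c}_1=\bChar{c}_2=\bChar{0}$ and $A_1\bChar{b}=A_2\bChar{b}=\bChar{0}$, while continuity across the interior edge gives $(A_1-A_2)\bChar{a}=\bChar{0}$. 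Then $A_1-A_2$ annihilates both $\bChar{a}$ and $\bChar{b}$, so $A_1-A_2=0$ and $\theta_{\bChar{z}}(q)=\operatorname{tr}(A_1-A_2)=0$.

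The only delicate step, and the main obstacle, is the faithful translation of the continuity and boundary conditions into the vanishing of the Jacobian differences along prescribed directions, together with the bookkeeping of which separating edge points along $\bChar{a}$ versus $\bChar{b}$ in the cyclic ordering. Once these kernel relations are established, the conclusion follows immediately from the linear independence of $\bChar{a}$ and $\bChar{b}$.
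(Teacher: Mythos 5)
Your proposal is correct, but it cannot be ``essentially the same approach as the paper'' because the paper does not prove \lemref{lem:discDiv} at all: it defers to the literature (\cite{Fabien_2022}, with a proof attributed to \cite{SV1985}). Your argument is therefore a genuinely self-contained alternative, and a sound one. The kernel relations you derive are exactly right: writing $\bChar{v}_h|_{T_i}=A_i\bChar{x}+\bChar{c}_i$, continuity across the edge shared by $T_i$ and $T_{i+1}$ forces $(A_i-A_{i+1})\bChar{d}=\bChar{0}$ for the edge direction $\bChar{d}$; the singularity of $\bChar{z}$ makes the four edge directions alternate (up to sign) between two independent vectors $\bChar{a}$ and $\bChar{b}$; hence $M=A_1-A_2+A_3-A_4$ annihilates both $\bChar{a}$ and $\bChar{b}$, so $M=0$ and in particular $\theta_{\bChar{z}}(\nabla\cdot\bChar{v}_h)=\operatorname{tr}(M)=0$. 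The exterior case correctly trades the two missing triangles for the homogeneous boundary condition, which supplies $\bChar{c}_i=\bChar{0}$ and $A_i\bChar{b}=\bChar{0}$. Two remarks. First, your proof actually yields more than the lemma asks: the alternating sum of the full Jacobians vanishes, not merely of their traces, and the interior case needs no boundary condition whatsoever---both observations consistent with the Scott--Vogelius theory this lemma comes from. Second, the one step you flag as ``delicate''---that the separating edges alternate $\bChar{b},\bChar{a},\bChar{b},\bChar{a}$ in cyclic order, rather than two collinear rays being cyclically adjacent---deserves its one-line justification: if two opposite rays were adjacent in the cyclic order, the triangle lying between them would have an interior angle of $\pi$ at $\bChar{z}$, which is impossible for a nondegenerate triangle. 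With that sentence added, your argument is complete and could stand in the paper in place of the external citation.
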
 
The pressure space is 
\[
P_h = \left\{ q_h \in L^2_0 (\Omega) \ | \ q_h|_T \in \mcP_0 (T),\, \forall T\in \tau_h,\, \theta_\bChar{z} (q_h) = 0, \, \forall \bChar{z}\in S \right\} . 
\]

Before the finite element problem can be stated formally, we must address a critical property of the discrete, solenoidal velocity space $\bChar{V}_h$.  This affects the boundary conditions and is also referenced later to construct a solenoidal basis in~\secref{sec:divFree}.  First, we require a technical lemma. 
\begin{lem}\label{lem:divFormula}
Let $T\subset\reals^2$ be a triangle with vertices $\bChar{z}_i$, $1\leq i\leq 3$, ordered  counter-clockwise going from $\bChar{z}_i$ to $\bChar{z}_{i+1}$ along $\partial T$.  
Each triangle edge, $e_i$, lies opposite the vertex $\bChar{z}_i$, with outward unit normal vector $\bChar{n}_i$.  Note the identity 
\begin{equation}
    \sum_{1\leq i\leq 3} |e_i| \bChar{n}_i = \bSym{0}.  
    \label{eqn:normalIdentity} 
\end{equation}
Given any $\bChar{v}\in \bChar{P}_1 (T)$, it holds that 
\begin{equation}
    \int_T \nabla \cdot \bChar{v} \, d\bChar{x} 
    = -\frac{1}{2}\sum_{1\leq i\leq 3} |e_i| \bChar{v} (\bChar{z}_i)\cdot \bChar{n}_i . 
    \label{eqn:divFormula} 
\end{equation}
\end{lem}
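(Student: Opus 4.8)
The plan is to derive~\eqref{eqn:divFormula} from the divergence theorem, reducing each boundary contribution to nodal values using the affine structure of $\bChar{v}$, and then to regroup the terms vertex-by-vertex so that the identity~\eqref{eqn:normalIdentity} produces the stated closed form.

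First I would apply the divergence theorem on $T$ and split the boundary integral over the three edges, writing
\[
\int_T \nabla\cdot\bChar{v}\,d\bChar{x}=\sum_{1\leq i\leq 3}\int_{e_i}\bChar{v}\cdot\bChar{n}_i\,ds .
\]
Since each component of $\bChar{v}\in\bSym{\mcP}_1(T)$ is affine, the scalar integrand $\bChar{v}\cdot\bChar{n}_i$ is an affine function of arc length along $e_i$, so its edge integral equals $|e_i|$ times its value at the midpoint of $e_i$, i.e.\ the average of its two endpoint values. Adopting cyclic indices in $\{1,2,3\}$, the edge $e_i$ opposite $\bChar{z}_i$ has endpoints $\bChar{z}_{i+1}$ and $\bChar{z}_{i+2}$, which gives
\[
\int_T \nabla\cdot\bChar{v}\,d\bChar{x}
=\frac{1}{2}\sum_{1\leq i\leq 3}|e_i|\bigl(\bChar{v}(\bChar{z}_{i+1})+\bChar{v}(\bChar{z}_{i+2})\bigr)\cdot\bChar{n}_i .
\]

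Next I would collect, for each fixed vertex $\bChar{z}_j$, the coefficient of $\bChar{v}(\bChar{z}_j)$. The value $\bChar{v}(\bChar{z}_j)$ appears exactly in the two edges incident to $\bChar{z}_j$, namely those $e_i$ with $i\neq j$, contributing $\tfrac{1}{2}\bChar{v}(\bChar{z}_j)\cdot\sum_{i\neq j}|e_i|\bChar{n}_i$. Here is where the identity~\eqref{eqn:normalIdentity} does the work: it gives $\sum_{i\neq j}|e_i|\bChar{n}_i=-|e_j|\bChar{n}_j$, so the contribution collapses to $-\tfrac{1}{2}|e_j|\,\bChar{v}(\bChar{z}_j)\cdot\bChar{n}_j$. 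Summing over $j\in\{1,2,3\}$ then yields~\eqref{eqn:divFormula}.

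The calculation is essentially routine, so the only real obstacle is the index bookkeeping: one must correctly identify the endpoints of each edge relative to its opposite vertex and track which two edges are incident to a given vertex, taking care with the counter-clockwise/cyclic convention fixed in the statement. As a sanity check I would also verify~\eqref{eqn:normalIdentity} itself, either by applying the divergence theorem to a constant field or by observing that $|e_i|\bChar{n}_i$ is the $90^\circ$ rotation of the corresponding directed edge vector and that the directed edge vectors sum to $\bSym{0}$ around the closed boundary $\partial T$.
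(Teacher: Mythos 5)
Your proof is correct and takes essentially the same route as the paper's: the paper likewise establishes~\eqref{eqn:normalIdentity} by rotating the directed edge vectors $90$ degrees and then deduces~\eqref{eqn:divFormula} from that identity via the divergence form of Green's theorem. Your write-up merely fills in the steps the paper leaves implicit (exactness of the trapezoidal rule for the affine integrand on each edge, and the vertex-by-vertex regrouping that invokes the identity), which is precisely how the paper's terse final sentence is meant to be completed.
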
 
\begin{proof}
Let $\mathcal{A}:\reals^2\to\reals^2$ be the linear operator that rotates a vector clockwise $90$ degrees.  Then 
\begin{equation*} 
\sum_{1\leq i\leq 3} |e_i| \bChar{n}_i 
=|e_1| \mathcal{A} \left( \frac{\bChar{z}_3 - \bChar{z}_2}{|e_1|}\right)
+|e_2| \mathcal{A} \left( \frac{\bChar{z}_1 - \bChar{z}_3}{|e_2|}\right)
+|e_3| \mathcal{A} \left( \frac{\bChar{z}_2 - \bChar{z}_1}{|e_3|}\right) 
= \bChar{0}.
\end{equation*} 
Then~\eqref{eqn:divFormula} follows from~\eqref{eqn:normalIdentity} after applying the divergence form of Green's Theorem.  
\end{proof}

We will use $ds$ for a measure along edges.   
\begin{lem}\label{lem:dofs}
Let $e^M \in E^M$ have endpoints $\bChar{z}_{1}$ and $\bChar{z}_{2}$.  
The trace of any $\bChar{v}_h\in \bChar{V}_h$ along $e^M$ may be uniquely determined from the values 
\begin{equation}
    \bChar{v}_h (\bChar{z}_1 ) , \ 
     \bChar{v}_h (\bChar{z}_2 ), \ \text{and} \ 
     \int_{e^M} \bChar{v}_h\cdot \bChar{n} \, ds , 
     \label{eqn:dofs} 
\end{equation}
where $\bChar{n}$ is a unit normal along $e^M$.   
\end{lem}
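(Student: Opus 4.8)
The plan is to use the fact that the Powell-Sabin construction places exactly one singular vertex, call it $\bChar{s}$, on $e^M$, splitting it into subsegments $[\bChar{z}_1,\bChar{s}]$ and $[\bChar{s},\bChar{z}_2]$ of lengths $\ell_1$ and $\ell_2$. Since $\bChar{v}_h\in\bChar{V}_h\subset[\mathcal{C}(\overline{\Omega})]^2$ is continuous and affine on each triangle of $\tau_h$, its trace on $e^M$ is continuous and piecewise affine with at most one break, located at $\bChar{s}$. Thus the trace is fixed by the three nodal vectors $\bChar{v}_h(\bChar{z}_1)$, $\bChar{v}_h(\bChar{s})$, $\bChar{v}_h(\bChar{z}_2)$, and it suffices to recover $\bChar{v}_h(\bChar{s})$ from $\bChar{v}_h(\bChar{z}_1)$, $\bChar{v}_h(\bChar{z}_2)$ and the flux $\int_{e^M}\bChar{v}_h\cdot\bChar{n}\,ds$. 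I would split $\bChar{v}_h(\bChar{s})$ into a normal part $\bChar{v}_h(\bChar{s})\cdot\bChar{n}$ and a tangential part $\bChar{v}_h(\bChar{s})\cdot\bChar{t}$, writing $\bChar{t}$ for the unit tangent of $e^M$, and determine each separately.

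First I would recover the normal part by integrating the trace exactly. As $\bChar{v}_h\cdot\bChar{n}$ is affine on each subsegment, the trapezoidal rule is exact and $\int_{e^M}\bChar{v}_h\cdot\bChar{n}\,ds = \tfrac{\ell_1}{2}\bigl(\bChar{v}_h(\bChar{z}_1)+\bChar{v}_h(\bChar{s})\bigr)\cdot\bChar{n} + \tfrac{\ell_2}{2}\bigl(\bChar{v}_h(\bChar{s})+\bChar{v}_h(\bChar{z}_2)\bigr)\cdot\bChar{n}$. Here $\bChar{v}_h(\bChar{s})\cdot\bChar{n}$ appears with the nonzero coefficient $\tfrac{1}{2}(\ell_1+\ell_2)$, so it is uniquely solved in terms of the two endpoint normal values and the prescribed flux.

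Next I would use incompressibility to pin the tangential part. Let $\bChar{c}$ be the incenter adjacent to $\bChar{s}$ in one macro-triangle containing $e^M$; the two Powell-Sabin subtriangles $\triangle\,\bChar{s}\bChar{z}_2\bChar{c}$ and $\triangle\,\bChar{s}\bChar{c}\bChar{z}_1$ share the edge $\bChar{s}\bChar{c}$. Imposing $\int_T\nabla\cdot\bChar{v}_h=0$ on each via \eqref{eqn:divFormula} yields two linear relations among the nodal values at $\bChar{s},\bChar{z}_1,\bChar{z}_2,\bChar{c}$. The crucial simplification, made transparent in coordinates aligned with $\bChar{t}$ and $\bChar{n}$, is that since the relevant subsegment of $e^M$ lies along $\bChar{t}$ the unknown $\bChar{v}_h(\bChar{c})\cdot\bChar{t}$ never enters, while $\bChar{v}_h(\bChar{c})\cdot\bChar{n}$ enters both relations with the same coefficient; subtracting them cancels it and leaves a single equation in which $\bChar{v}_h(\bChar{s})\cdot\bChar{t}$ carries the nonzero coefficient $-(1/\ell_1+1/\ell_2)$. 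Because the normal values are already known, this determines $\bChar{v}_h(\bChar{s})\cdot\bChar{t}$ and completes the reconstruction. For exterior $e^M$ these are the only two subtriangles at $\bChar{s}$, so the pair of relations applies directly; for interior $e^M$ the two subtriangles in the opposite macro-triangle produce the same equation, since by the singular-vertex property the two incenters are collinear with $\bChar{s}$ and the geometry enters only through the shared slope of that line, so the system is consistent.

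The hard part will be the tangential step: choosing local coordinates so the automatic cancellation of $\bChar{v}_h(\bChar{c})$ is evident, and checking that the geometric factor multiplying the (already-known) normal data is well defined, i.e. that the denominator measuring the signed distance of $\bChar{c}$ from the line through $e^M$ does not vanish. This holds because the incenter lies strictly inside its macro-triangle and therefore off the line containing $e^M$.
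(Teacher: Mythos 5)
Your proposal is correct and follows essentially the same route as the paper's proof: both reduce the claim to determining $\bChar{v}_h$ at the singular vertex of $e^M$, recover its normal component by exact integration of the flux, and pin the tangential component by applying the divergence formula of \lemref{lem:divFormula} on the two Powell-Sabin subtriangles whose common apex is the incenter, then eliminating the unknown incenter value. The only difference is cosmetic: the paper eliminates by summing the two relations and invoking~\eqref{eqn:normalIdentity} to first show $\bChar{v}_h(\bChar{z}_c)\cdot\bChar{n}=0$, whereas you subtract suitably normalized relations to cancel that term directly---the same linear algebra in a different order.
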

\begin{proof}
Refer to~\figref{fig:labels} to visualize the vertices, edges and normal vectors used in this proof.  
Let $\bChar{z}_s\in S$ be the singular vertex of $\tau_h$ on $e^M$.  
    Since $\bChar{v}_h$ is piecewise linear and continuous along $e^M$, it suffices to show that if the values in~\eqref{eqn:dofs} are all zero, then also $\bChar{v}_h (\bChar{z}_s)=\bChar{0}$.  
    Direct integration shows that if $\bChar{v}_h (\bChar{z}_i )=\bChar{0}$, $i=1,2$, then 
    \begin{equation}
 \int_{e^M} \bChar{v}_h\cdot \bChar{n} \, ds =0\Rightarrow 
 \bChar{v}_h (\bChar{z}_s)\cdot \bChar{n} = 0.  
 \label{eqn:l3e0}
    \end{equation}
    
    It remains only to show that the tangential component is zero; equivalently $\bChar{v}_h (\bChar{z}_s)\cdot (\bChar{z}_2 -\bChar{z}_1)= 0$.  
    Let $\bChar{z}_c$ be the incenter of a triangle in $\tau_h^M$ that has edge $e^M$.  There are two triangles, say $T_1$ and $T_2$, in $\tau_{\bChar{z}_c}$ with edges $e_1$ and $e_2$, respectively, that lie on $e^M$, opposite $\bChar{z}_c$.  They have a shared edge lying opposite $\bChar{z}_i$ on $T_i$.  Let the edge of $T_i$ opposite $\bChar{z}_s$ be $e_i^s$ and have outward unit normal $\bChar{n}_i^s$ for $i=1,2$.    
   
    Since $\bChar{v}_h$ is solenoidal, we may apply~\eqref{eqn:divFormula} on each triangle $T_i$ to find 
    \begin{align}
        |e_1^s| \bChar{v}_h (\bChar{z}_s)\cdot \bChar{n}_1^s 
        +|e_1| \bChar{v}_h (\bChar{z}_c)\cdot \bChar{n} &=0 \label{eqn:l3e1} \\
  \text{and}\quad  |e_2^s| \bChar{v}_h (\bChar{z}_s)\cdot \bChar{n}_2^s 
        +|e_2| \bChar{v}_h (\bChar{z}_c)\cdot \bChar{n} &=0 . \label{eqn:l3e2}
    \end{align}
     
    Note that edges $e_1^s$, $e_2^s$ and $e^M$ form a perimeter of triangle $T_1\cup T_2$, hence the vectors $\bChar{n}_1^s$ and $\bChar{n}_2^s$ are equivalent to a certain basis of $\reals^2$.  
    Then~\eqref{eqn:l3e1}-\eqref{eqn:l3e2} show that $\bChar{v}_h (\bChar{z}_s)=\bChar{0}$ if $\bChar{v}_h (\bChar{z}_c)\cdot \bChar{n}=\bChar{0}$.  
    Indeed, now sum~\eqref{eqn:l3e1}-\eqref{eqn:l3e2} and apply~\eqref{eqn:normalIdentity},~\eqref{eqn:l3e0} to show that 
    \begin{multline*}
        (|e_1|+|e_2|)\bChar{v}_h (\bChar{z}_c)\cdot \bChar{n} 
        = -|e_1^s| \bChar{v}_h (\bChar{z}_s)\cdot \bChar{n}_1^s 
        -|e_2^s| \bChar{v}_h (\bChar{z}_s)\cdot \bChar{n}_2^s \\ 
        = (|e_1|+|e_2|)\bChar{v}_h (\bChar{z}_s)\cdot \bChar{n} =\bChar{0}.
    \end{multline*}
\end{proof}

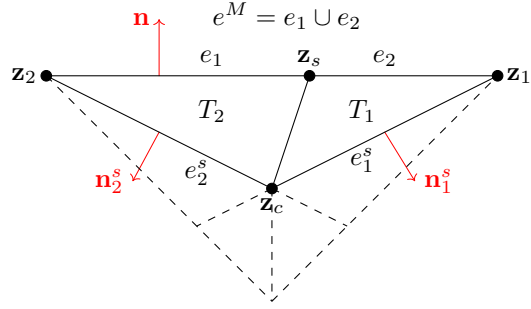
\begin{figure}
\centering 
\begin{tikzpicture}
\coordinate (A) at (0,3);
\tkzLabelPoint[above, left](A){$\bChar{z}_2$} 
\coordinate (B) at (3,0);
\coordinate (C) at (6,3);
\tkzLabelPoint[above, right](C){$\bChar{z}_1$} 
\coordinate (D) at (3,1.5);
\tkzLabelPoint[below](D){$\bChar{z}_c$} 
\coordinate (AB) at (2,1);
\coordinate (BC) at (4,1);
\coordinate (AC) at (3.5,3);
\tkzLabelPoint[above](AC){$\bChar{z}_s$} 
\coordinate (eM) at (3.2,3.5);
\tkzLabelPoint[above](eM){$e^M = e_1\cup e_2$} 
\coordinate (e1) at (2.2,3);
\tkzLabelPoint[above](e1){$e_1$} 
\coordinate (e2) at (4.5,3);
\tkzLabelPoint[above](e2){$e_2$} 
\coordinate (T1) at (4.2,2.8);
\tkzLabelPoint[below](T1){$T_1$} 
\coordinate (T2) at (2.2,2.8);
\tkzLabelPoint[below](T2){$T_2$} 
\coordinate (nt) at (1.5,3);
\coordinate (nh) at (1.5,3.75);
\tkzDrawSegment[->, red](nt,nh);
\tkzLabelPoint[above, left](nh){\red{$\bChar{n}$}} 
\coordinate (n1t) at (4.5,2.25);
\coordinate (n1h) at (4.9,1.6);
\tkzDrawSegment[->, red](n1t,n1h);
\tkzLabelPoint[right](n1h){\red{$\bChar{n}_1^s$}} 
\coordinate (e1s) at (4.2,1.6);
\tkzLabelPoint[above](e1s){$e_1^s$} 
\coordinate (n2t) at (1.5,2.25);
\coordinate (n2h) at (1.15,1.6);
\tkzDrawSegment[->, red](n2t,n2h);
\tkzLabelPoint[left](n2h){\red{$\bChar{n}_2^s$}} 
\coordinate (e2s) at (2.0,2.0);
\tkzLabelPoint[below](e2s){$e_2^s$} 
\filldraw [black] (A) circle (2pt); 
\filldraw [black] (AC) circle (2pt); 
\filldraw [black] (C) circle (2pt); 
\filldraw [black] (D) circle (2pt); 
\draw [black] (A) -- (C);
\draw [black, dashed] (A) -- (B);
\draw [black, dashed] (B) -- (C);
\draw [black] (D) -- (AC);
\draw [black] (D) -- (A);
\draw [black] (D) -- (C);
\draw [black, dashed] (D) -- (AB);
\draw [black, dashed] (D) -- (BC);
\draw [black, dashed] (D) -- (B);
\end{tikzpicture}
\caption{Labels referenced in the proof of~\lemref{lem:dofs}.}\label{fig:labels}
\end{figure}

A consequence of~\lemref{lem:dofs} is that we cannot generally interpolate boundary data $\bChar{g}\in [\mathcal{C}(\partial\Omega)]^2$ to compute a solenoidal Stokes velocity $\bChar{u}_h \in \bChar{V}_h$ by setting $\bChar{u}_h (\bChar{z}) = \bChar{g} (\bChar{z})$ at all the exterior nodes $\bChar{z}$ of the Powell-Sabin mesh.  
The following definition provides an appropriate interpolant, where the function $\bChar{G}_h$ will be explicitly constructed in~\secref{sec:divFree}.  
\begin{defn}[Boundary data interpolation]\label{defn:gh} 
Let $\bChar{g}\in \bChar{H}^{1/2}(\partial\Omega)\cap [ \mathcal{C}(\partial\Omega)]^2$.  
We denote by $\bChar{g}_h$ the interpolant of $\bChar{g}$, where $\bChar{g}_h = \gamma_0 (\bChar{G}_h)$ for some $\bChar{G}_h\in \bChar{V}_h$ and 
\begin{align}
    \bChar{g}_h (\bChar{z}) &= \bChar{g} (\bChar{z}),\ \forall \bChar{z}\in V_{ext}^M \label{eqn:ghNodes} \\
    \text{and} \quad 
    \int_{e} \bChar{g}_h \cdot \bChar{n} \, ds 
    &= \int_{e} \bChar{g} \cdot \bChar{n} \, ds , \ \forall e\in E_{ext}^M, \label{eqn:ghMoments}  
\end{align}
where $\bChar{n}$ is the outward unit normal vector on $\partial\Omega$.  
\end{defn}

We may now state the finite element problem.  We solve for $\bChar{u}_h \in \bChar{X}_h$ and $p_h \in P_h$ by decomposing $\bChar{u}_h = \bChar{w}_h +\bChar{G}_h$, $\bChar{G}_h \in \bChar{V}_h$ as in~\defref{defn:gh} and $\bChar{w}_h \in \bChar{X}_h^0$ such that 
\begin{align}
    a(\bChar{w}_h , \bChar{v}_h ) -(p_h , \nabla \cdot \bChar{v}_h ) &= L(\bChar{v}_h),\ \forall \bChar{v}_h \in \bChar{X}_h^0 \label{eqn:fem1} \\
    (\nabla \cdot \bChar{w}_h , q_h ) &= 0 ,\ \forall q_h \in P_h \label{eqn:fem2} \\ 
    a(\bChar{w}_h , \bChar{v}_h ) &:= \nu \left( \nabla \bChar{w}_h ,\bChar{v}_h \right) \nonumber \\ 
    L(\bChar{v}_h ) &:= (\bChar{f} , \bChar{v}_h) - a (\bChar{G}_h , \bChar{v}_h) \nonumber 
\end{align}
However, it is shown in~\cite{Fabien_2022} that the solution of~\eqref{eqn:fem1}-\eqref{eqn:fem2} will in fact have $\bChar{w}_h\in \bChar{V}_h^0$.  
Thus $\bChar{u}_h \in \bChar{V}_h$, and we focus in this paper on directly solving the problem on the solenoidal subspace; find $\bChar{w}_h \in \bChar{V}_h^0$ satisfying 
\begin{equation}
       a(\bChar{w}_h , \bChar{v}_h )  = L(\bChar{v}_h),\ \forall \bChar{v}_h \in \bChar{V}_h^0 . \label{eqn:dffem}  
\end{equation}
This is accomplished in~\secref{sec:divFree}.  Note that the pressure variable need not be computed, but if it is desired we also provide a way to compute it after the velocity is found.  
One solves the problem: find $p_h \in P_h$ such that
\begin{equation}
(p_h , \nabla \cdot \bChar{v}_h ) =
       a(\bChar{u}_h , \bChar{v}_h )   -(\bChar{f},\bChar{v}_h),\ \forall \bChar{v}_h \in \bChar{X}_h^0 \setminus \bChar{V}_h^0. \label{eqn:dfPressure}  
\end{equation}
This method is detailed in~\secref{sec:pressure}.   

\section{A solenoidal velocity basis}\label{sec:divFree} 
Explicit constructions of basis functions are provided here.  
We also show how to implement boundary conditions, and a localized implementation of the basis using a macro-element technique.  

\subsection{Bases of $\bChar{V}_h$ and $\bChar{V}_h^0$}\label{sec:basisDefn}  
Given a vertex $\bChar{z}\in V^M$, the support of each basis function will be $\tau_{\bChar{z}}^M$.  
Consider the subspaces with this support: 
\begin{equation}
    \bChar{V}_{\bChar{z}} = \left\{ \bChar{v}_h \in \bChar{V}_h \ | \ \bChar{v}_h (\bChar{x}) = \bChar{0} \ \forall \bChar{x} \in T^M , \ \forall T^M\subset \tau_h^M \setminus 
 \tau_{\bChar{z}}^M \right\} . 
 \label{eqn:Vz} 
\end{equation}
We will first construct a basis of $\bChar{V}_{\bChar{z}}$.  
A certain union of these across vertices $\bChar{z}$ will later form the desired basis for $\bChar{V}_h$.  
\begin{lem}\label{lem:localBasis} 
Given any $\bChar{z}\in V^M$, it holds that $\text{dim}(\bChar{V}_{\bChar{z}})=3$.  
Moreover, a basis $\{\bSym{\Phi}_1,\bSym{\Phi}_2 ,\bSym{\Phi}_3 \}$ is found by taking $(\alpha_1 , \beta_1 , \delta_1)=(1,0,0)$, $(\alpha_2 , \beta_2 , \delta_2)=(0,1,0)$, $(\alpha_3 , \beta_3 , \delta_3)=(0,0,1)$, and setting 
\begin{align}
    \bSym{\Phi}_i (\bChar{z}) &= (\alpha_i , \beta_i) \label{eqn:Phi12} \\ 
    \text{and} \quad \int_{e_j^M} \bSym{\Phi}_i \cdot \bChar{n}_{j}\, ds &= \delta_i, \label{eqn:Phi3} 
\end{align}
for $1\leq i\leq 3$, where, for each $T_j^M \in \tau_{\bChar{z}}^M$, 
$\{e_{j-1}^M,e_j^M \}\subset E^M$ are its edges that have $\bChar{z}$ as one endpoint, and $\bChar{n}_j$ is the unit normal on $e_j^M$ that points in the direction of counter-clockwise rotation around $\bChar{z}$.  
\end{lem}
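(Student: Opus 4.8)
The plan is to show that the three linear functionals $\bChar{v}\mapsto \bChar{v}(\bChar{z})\cdot(1,0)$, $\bChar{v}\mapsto \bChar{v}(\bChar{z})\cdot(0,1)$ and $\bChar{v}\mapsto \int_{e_j^M}\bChar{v}\cdot\bChar{n}_j\,ds$ form a unisolvent set on $\bChar{V}_{\bChar{z}}$. This simultaneously yields $\text{dim}(\bChar{V}_{\bChar{z}})=3$ and the stated basis, since each $\bSym{\Phi}_i$ is then the unique element of $\bChar{V}_{\bChar{z}}$ whose data equals $(\alpha_i,\beta_i,\delta_i)$. It suffices to prove (i) \emph{injectivity}: the only $\bChar{v}\in\bChar{V}_{\bChar{z}}$ with $\bChar{v}(\bChar{z})=\bChar{0}$ and $\int_{e_j^M}\bChar{v}\cdot\bChar{n}_j\,ds=0$ is $\bChar{v}=\bChar{0}$, giving $\text{dim}(\bChar{V}_{\bChar{z}})\le 3$; and (ii) \emph{existence}: every data triple is attained, giving $\text{dim}(\bChar{V}_{\bChar{z}})\ge 3$.

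The key preliminary observation, used throughout, is that the spoke fluxes are all equal. Fix $T_j^M\in\tau_{\bChar{z}}^M$. Any $\bChar{v}\in\bChar{V}_{\bChar{z}}$ is continuous, solenoidal, and vanishes on $\tau_h^M\setminus\tau_{\bChar{z}}^M$, hence vanishes on the edge of $T_j^M$ opposite $\bChar{z}$, which lies on $\partial\tau_{\bChar{z}}^M$. Applying the divergence theorem on $T_j^M$ (valid since $\bChar{v}$ is continuous and piecewise solenoidal on its Powell-Sabin sub-triangles) and discarding the vanishing outer-edge flux gives $\int_{e_j^M}\bChar{v}\cdot\bChar{n}_j\,ds=\int_{e_{j-1}^M}\bChar{v}\cdot\bChar{n}_{j-1}\,ds$, after accounting for the outward orientations of the two spoke edges relative to $\bChar{n}_{j-1},\bChar{n}_j$. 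Thus this flux has a common value over all spoke edges, so a single scalar $\delta$ captures all of them.

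For injectivity, suppose $\bChar{v}(\bChar{z})=\bChar{0}$ and one spoke flux vanishes. By the flux-equality observation every spoke flux is zero, and every outer edge carries zero trace by the support condition. \lemref{lem:dofs} then forces the trace of $\bChar{v}$ to vanish on every macro-edge of the star; in particular $\bChar{v}$ is zero at every macro-vertex and every singular vertex of $\tau_h$ on $\partial T_j^M$. Within each $T_j^M=\bChar{a}_1\bChar{a}_2\bChar{a}_3$ only the incenter value $\bChar{v}(\bChar{c})$ remains. I would apply \lemref{lem:divFormula} to the two sub-triangles incident to $\bChar{c}$ whose outer edges lie on the two distinct macro-edges meeting at $\bChar{a}_1$. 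Since all vertex values except $\bChar{v}(\bChar{c})$ vanish, each identity collapses to $\bChar{v}(\bChar{c})\cdot\bChar{n}=0$ for the corresponding macro-edge normal. The two macro-edges are non-parallel, so their normals are linearly independent and $\bChar{v}(\bChar{c})=\bChar{0}$. Hence $\bChar{v}\equiv\bChar{0}$ and $\text{dim}(\bChar{V}_{\bChar{z}})\le 3$.

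For existence I would build $\bSym{\Phi}_i$ directly on the star: set its value at $\bChar{z}$, prescribe the common spoke flux $\delta_i$, impose zero data on $\partial\tau_{\bChar{z}}^M$, and determine the remaining singular-vertex and incenter values from the requirement that $\nabla\cdot\bSym{\Phi}_i=0$ on each sub-triangle. The main obstacle is the solvability of this coupled local system: the per-sub-triangle divergence conditions are over-determined relative to the interior unknowns, so one must verify consistency. Two structural facts make it go through. First, the constraints along a single macro-edge and across the three macro-edges of $T_j^M$ are compatible because the macro-edge normals satisfy the identity~\eqref{eqn:normalIdentity} and the flux-balance relation above forces the total divergence over $T_j^M$ to vanish. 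Second, the coupling between adjacent macro-triangles through a shared interior singular vertex is reconciled precisely by the constraint $\theta_{\bChar{z}_s}(\nabla\cdot\bSym{\Phi}_i)=0$ of \lemref{lem:discDiv}. Verifying that these conditions leave a unique consistent choice of interior values---equivalently, that the data map is onto $\reals^3$---is the technical heart of the argument and is carried out by the explicit local construction; combined with (i) it gives $\text{dim}(\bChar{V}_{\bChar{z}})=3$ and the stated basis.
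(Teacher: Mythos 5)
Your injectivity argument is sound and matches the paper's uniqueness step almost exactly: flux equality across the spoke edges, \lemref{lem:dofs} to annihilate the traces on the macro-edges, then \lemref{lem:divFormula} on two sub-triangles with non-parallel outer normals to annihilate the incenter value. The genuine gap is in the existence half. You correctly identify that the local system determining the singular-vertex and incenter values is over-determined, but you then defer its solvability to ``the explicit local construction,'' which is precisely the object whose existence is in question; as written, surjectivity of the data map is asserted, not proven. Moreover, the two ``structural facts'' you invoke do not close this gap. \lemref{lem:discDiv} is a necessary condition satisfied by divergences of functions in $\bChar{X}_h^0$; it says nothing about whether your over-determined system of divergence constraints admits a solution with the prescribed data. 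And the coupling between adjacent macro-triangles of the star is reconciled not by $\theta_{\bChar{z}_s}$ but by \lemref{lem:dofs} itself: the trace on a shared spoke edge is already pinned down by the common data $(\alpha_i,\beta_i)$ at $\bChar{z}$, the value $\bChar{0}$ at the outer endpoint, and the flux $\delta_i$, so solutions constructed independently on each macro-triangle glue continuously.

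The paper closes this gap with a dimension count that converts your injectivity into existence for free. Restrict to a single macro-triangle $T_j^M$ and consider the space $\bChar{W}_j$ of continuous, piecewise-linear functions on its Powell-Sabin split that vanish on the outer edge $e^M$; it has dimension $8$ (four unconstrained vertices, two components each). Impose exactly eight linear constraints: the two nodal values at $\bChar{z}$, the flux $\delta$, and $\nabla\cdot\bChar{v}_h=0$ on only \emph{five} of the six sub-triangles. The sixth divergence condition is then automatic: applying \lemref{lem:divFormula} on $T_2$ forces $\bChar{v}_h(\bChar{z}_c)\cdot\bChar{n}=0$, and on $T_1$ the divergence is proportional to that same quantity. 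The constraint map is therefore a square linear system on an $8$-dimensional space, and your injectivity argument shows its kernel is trivial, so it is bijective; every data triple $(\alpha,\beta,\delta)$ is attained, giving $\text{dim}(\bChar{V}_{\bChar{z}})=3$ without ever writing the basis functions explicitly. If you want to repair your outline, this redundancy observation---within each macro-triangle, one divergence constraint is implied by the other five together with the vanishing data on the outer edge---is the missing consistency fact that makes the local system square rather than over-determined.
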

\begin{proof}
Note that we must have 
\[
0 = \int_{T_j^M} \nabla \cdot \bSym{\Phi}_i \, d\bChar{x} 
= \pm \left( \int_{e_j^M} \bSym{\Phi}_i \cdot \bChar{n}_{j}\, ds - \int_{e_{j-1}^M} \bSym{\Phi}_i \cdot \bChar{n}_{j-1}\, ds \right) 
\]
so that the value of $\delta_i$ is $j$-independent in~\eqref{eqn:Phi3}.  
Since $\bChar{z}$ is the common vertex of all $T_j^M\in \tau_{\bChar{z}}^M$, the conditions~\eqref{eqn:Phi12}-\eqref{eqn:Phi3} impose the same constraints on each $T_j^M$, so it suffices to look at the restriction to one $T_j^M$.  
Denote by $\bChar{z}_c$ the incenter of $T_j^M$ and $e^M$ the edge that lies opposite $\bChar{z}$.  

Consider the vector space 
\[
\bChar{W}_j = \left\{ \bChar{v}_h \in \mathcal{C} (T_j^M)\ | \ \bChar{v}_h \in  \bChar{P}_1 (T), \forall T\in\tau_{\bChar{z}_c}, \ \text{and}\ \bChar{v}_h =\bChar{0} \ \text{on}\ e^M \right\} .
\]
The six triangles in $\tau_{\bChar{z}_c}$ together have seven distinct vertices (see~\eg~\figref{fig:PSmesh}, with $\bChar{v}_h=\bChar{0}$ at the three vertices on $e^M$.  
Any $\bChar{v}_h\in \bChar{W}_j$ is uniquely determined by its values at the other four vertices, so $\text{dim}(\bChar{W}_j)=8$.  
Two triangles $T_i\in \tau_{\bChar{z}_c}$ have an edge lying on $e^M$, say $T_1$ and $T_2$ (see~\figref{fig:labels}).  
Eight linear constraints may be introduced by requiring 
\begin{align*}
    \bChar{v}_h (\bChar{z}) &=(\alpha,\beta) , \\
    \quad \int_{e_j^M} \bChar{v}_h \cdot \bChar{n}_{j}\, ds &= \delta ,
\end{align*}  
for any desired values $\alpha$, $\beta$ and $\delta$, 
and also setting $\nabla\cdot \bChar{v}_h=0$ on triangles $T_i$, $2\leq i\leq 6$.  In fact, then $\nabla\cdot \bChar{v}_h=\bChar{0}$ will hold also on $T_1$.  
This is verified as follows.  
Note that $T_1$ and $T_2$ each have two vertices on $e^M$, where $\bChar{v}_h=\bChar{0}$, and their third vertex is $\bChar{z}_c$.  
Apply~\lemref{lem:divFormula} on $T_2$ (where we impose $\nabla \cdot \bChar{v}_h=\bChar{0}$) first, which reduces to 
\[
0=\bChar{v}_h (\bChar{z}_c) \cdot \bChar{n} ,
\]
where $\bChar{n}$ is the outward unit normal for $T_j^M$ along $e^M$.  
Application of~\lemref{lem:divFormula} on $T_1$ shows that $\nabla \cdot \bChar{v}_h$ is proportional to $\bChar{v}_h (\bChar{z}_c) \cdot \bChar{n}$, thus $\nabla \cdot \bChar{v}_h=\bChar{0}$ on $T_1$.  

The proof reduces to showing that if $\alpha=\beta=\delta=0$ then $\bChar{v}_h =\bChar{0}$ at all seven vertices.  
We already have $\bChar{v}_h=\bChar{0}$ at the three vertices on $e^M$, plus at $\bChar{z}$ as well under these conditions.  
Since $\delta=0$, we may apply~\lemref{lem:dofs} to conclude that $\bChar{v}_h=\bChar{0}$ along the other two edges of the macro-triangle $T_j^M$, hence at the two singular vertices on those edges, leaving only the incenter vertex $\bChar{z}_c$.  
Above we showed $\bChar{v}_h (\bChar{z}_c) \cdot \bChar{n} =0$; repeating the arguments (apply~\lemref{lem:divFormula}) on any of the triangles $T_i$ with $3\leq i\leq 6$ shows that $\bChar{v}_h (\bChar{z}_c)\cdot \bChar{n}^* =0$ for $\bChar{n}^* =\bChar{n}_{j}$ or $\bChar{n}^* =\bChar{n}_{j-1}$.  
Since $\bChar{n}$ and $\bChar{n}^*$ cannot be parallel, it follows $\bChar{v}_h (\bChar{z}_c) =0$.  
\end{proof}

The solenoidal basis is discussed in the next result.  
It requires the exclusion of a single function to form a basis, but the association made with the boundary by choosing a vertex $\bChar{z}_0 \in V_{ext}^M$ is for convenience in implementing boundary conditions; see~\secref{sec:implement}.  
Also, in~\secref{sec:pressure} a single boundary vertex must be chosen to extract a special basis for the pressure space, so in practice it is natural to use the same vertex  $\bChar{z}_0$.  
\begin{thm}\label{thm:Vh}
Given an indexing $\bChar{z}_k \in V^M$, $0\leq k \leq |V^M|-1$ such that $\bChar{z}_0 \in V_{ext}^M$ (lies on $\partial\Omega$), we denote by 
$\bSym{\Phi^{(k)}_i}$, $1\leq i \leq 3$, the basis functions of $\bChar{V}_{\bChar{z}}$ defined in~\lemref{lem:localBasis} for $\bChar{z} = \bChar{z}_k$.  
We define sets 
\begin{align*}
    \mathcal{B} &= \left[ \bigcup_{i=1}^{3}\bigcup_{k=0}^{|V^M|-1} \bSym{\Phi^{(k)}_i}  \right] \setminus \left\{ \bSym{\Phi^{(0)}_3}  \right\} \\ 
    \mathcal{B}^0 &= \left\{ \bSym{\Phi^{(k)}_i} \in \mathcal{B}\, | \, \bChar{z}_k \in V_{int}^M \right\} .
\end{align*}
It holds that $\mathcal{B}$ is a basis of $\mathbf{V}_h$ and $\mathcal{B}^0$ is a basis of $\mathbf{V}_h^0$.  
\end{thm}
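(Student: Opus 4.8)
The plan is to prove, for each of the two claims, that the proposed set is linearly independent and has cardinality equal to the dimension of the target space; once $|\mathcal{B}|$ and $|\mathcal{B}^0|$ are shown to match $\dim\bChar{V}_h$ and $\dim\bChar{V}_h^0$, the spanning property follows for free. The whole argument is organised around the degrees of freedom appearing in \lemref{lem:localBasis}: the vertex value $\bChar{v}_h(\bChar{z})$ at each $\bChar{z}\in V^M$ and the oriented flux $\int_{e^M}\bChar{v}_h\cdot\bChar{n}\,ds$ across each $e^M\in E^M$. Two observations about the local functions will be used repeatedly. First, since $\bSym{\Phi^{(k)}_i}$ is supported on $\tau^M_{\bChar{z}_k}$ and vanishes on the outer boundary of that star, it satisfies $\bSym{\Phi^{(k)}_i}(\bChar{z}_m)=\bChar{0}$ whenever $m\neq k$, while $\bSym{\Phi^{(k)}_i}(\bChar{z}_k)=(\alpha_i,\beta_i)$. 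Second, by \eqref{eqn:Phi3} the functions $\bSym{\Phi^{(k)}_1}$ and $\bSym{\Phi^{(k)}_2}$ have zero flux across every macro-edge, while $\bSym{\Phi^{(k)}_3}$ has nonzero flux exactly across the edges incident to $\bChar{z}_k$, and the counter-clockwise normal convention makes the two endpoints of such an edge contribute with opposite signs.

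To prove $\mathcal{B}$ is independent, first I would evaluate a vanishing combination $\sum_{k,i}c^{(k)}_i\bSym{\Phi^{(k)}_i}=\bChar{0}$ at each macro-vertex $\bChar{z}_m$; the first observation gives $(c^{(m)}_1,c^{(m)}_2)=\bChar{0}$, eliminating all coefficients of $\bSym{\Phi^{(k)}_1}$ and $\bSym{\Phi^{(k)}_2}$. The relation collapses to $\sum_k c^{(k)}_3\bSym{\Phi^{(k)}_3}=\bChar{0}$, and taking its flux across a macro-edge with endpoints $\bChar{z}_a,\bChar{z}_b$ yields a relation involving only $c^{(a)}_3$ and $c^{(b)}_3$ with nonzero coefficients. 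Since $\bSym{\Phi^{(0)}_3}$ has been deleted, each edge incident to $\bChar{z}_0$ forces the coefficient at its other endpoint to vanish, and connectivity of the macro-mesh then propagates $c^{(k)}_3=0$ to every vertex. The same computation shows that summing all of the $\bSym{\Phi^{(k)}_3}$ produces a function with vanishing vertex values and vanishing fluxes, hence the identically-zero function by \lemref{lem:dofs}; this is the unique dependence among the full family, which is exactly why deleting a single $\bSym{\Phi^{(k)}_3}$ suffices.

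It remains to compute $\dim\bChar{V}_h$. I would show the map sending $\bChar{v}_h$ to its vertex values $\bChar{v}_h(\bChar{z})$, $\bChar{z}\in V^M$, and edge fluxes $\int_{e^M}\bChar{v}_h\cdot\bChar{n}\,ds$, $e^M\in E^M$, is a bijection from $\bChar{V}_h$ onto the data satisfying, for each $T^M\in\tau_h^M$, the single relation $\int_{\partial T^M}\bChar{v}_h\cdot\bChar{n}\,ds=0$. Injectivity is \lemref{lem:dofs} (trivial data forces a zero trace on every macro-edge) together with the incenter step inside \lemref{lem:localBasis}; surjectivity follows by realising the prescribed data on each macro-triangle separately via \lemref{lem:localBasis} and gluing, the shared-edge traces agreeing by \lemref{lem:dofs}. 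The per-triangle flux relations are linearly independent: an interior edge forces the multipliers of its two triangles to agree, a boundary edge forces a multiplier to vanish, and connectivity together with the presence of at least one boundary edge annihilates all multipliers. Hence $\dim\bChar{V}_h=2|V^M|+|E^M|-|\tau_h^M|$, and Euler's relation $|V^M|-|E^M|+|\tau_h^M|=1$ for a triangulated simply connected domain reduces this to $3|V^M|-1=|\mathcal{B}|$, so $\mathcal{B}$ is a basis of $\bChar{V}_h$.

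For the homogeneous space I would first verify $\mathcal{B}^0\subset\bChar{V}_h^0$: if $\bChar{z}_k\in V^M_{int}$ then any boundary edge meeting $\tau^M_{\bChar{z}_k}$ lies opposite $\bChar{z}_k$, where $\bSym{\Phi^{(k)}_i}$ vanishes, so its trace on $\partial\Omega$ is zero; independence of $\mathcal{B}^0$ is inherited from $\mathcal{B}$. For the dimension I would rerun the count, noting that by \lemref{lem:dofs} the condition $\gamma_0(\bChar{v}_h)=\bChar{0}$ is equivalent to vanishing vertex values on $V^M_{ext}$ and vanishing fluxes on $E^M_{ext}$, so the bijection identifies $\bChar{V}_h^0$ with the interior data subject to the per-triangle relations. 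With the boundary fluxes removed a boundary edge no longer constrains the multipliers, so the only surviving multiplier is the global constant and the relations now have rank $|\tau_h^M|-1$; this yields $\dim\bChar{V}_h^0=2|V^M_{int}|+|E^M_{int}|-|\tau_h^M|+1$, which together with $|V^M_{ext}|=|E^M_{ext}|$ for the single boundary polygon and Euler's relation simplifies to $3|V^M_{int}|=|\mathcal{B}^0|$. I expect the main obstacle to be the surjectivity half of the degree-of-freedom count and the independence of the per-triangle flux relations: both rest on the simple connectivity of $\Omega$, which is precisely what makes Euler's relation and the global flux bookkeeping produce the stated dimensions.
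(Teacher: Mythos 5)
Your proof is correct, and it splits into one half that matches the paper and one half that is genuinely different. The linear-independence argument for $\mathcal{B}$ is the paper's own: evaluate a vanishing combination at the macro-vertices to kill the $c_1^{(k)},c_2^{(k)}$ coefficients, then use the edge-flux functionals (opposite signs at the two endpoints of each macro-edge) and connectivity of the macro-mesh, anchored at the deleted function at $\bChar{z}_0$, to force all $c_3^{(k)}=0$. Where you depart from the paper is the dimension count. The paper obtains $\dim(\bChar{V}_h)=2|V^M|+|E^M|-|\tau_h^M|$ by rank-nullity applied to the divergence operator on $\bChar{X}_h$, quoting the external result $\dim(\nabla\cdot\bChar{X}_h)=|E^M|+3|\tau_h^M|$ from~\cite{GLN2020}, and then finishes with Euler's identity; you instead prove the same intermediate formula self-containedly by showing that $\bChar{v}_h\mapsto\left(\{\bChar{v}_h(\bChar{z})\}_{\bChar{z}\in V^M},\ \{\textstyle\int_{e^M}\bChar{v}_h\cdot\bChar{n}\,ds\}_{e^M\in E^M}\right)$ is a bijection from $\bChar{V}_h$ onto the data cut out by one flux-balance relation per macro-triangle, with injectivity from \lemref{lem:dofs} plus the incenter step of \lemref{lem:localBasis}, surjectivity by local realization and gluing, and rank $|\tau_h^M|$ of the relations by your multiplier argument. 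Your route buys two things: it avoids the citation to~\cite{GLN2020} entirely, and it makes the $\mathcal{B}^0$ claim rigorous, since rerunning the count with boundary vertex values and boundary fluxes deleted (where the rank drops to $|\tau_h^M|-1$ because the all-equal multiplier becomes a dependence) yields $\dim(\bChar{V}_h^0)=3|V_{int}^M|=|\mathcal{B}^0|$; the paper disposes of $\mathcal{B}^0$ with a single sentence. The cost is that your route requires extra verifications the paper never needs: that the nine restrictions of the \lemref{lem:localBasis} functions to one macro-triangle span its full $8$-dimensional constrained data set (the three flux patterns $(0,1,-1)$, $(-1,0,1)$, $(1,-1,0)$ span the zero-sum plane), and that the dual graph of the triangulation is connected; both are routine but should be stated. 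One small imprecision: your side remark that the unique dependence $\sum_k\bSym{\Phi}_3^{(k)}=\bChar{0}$ follows ``by \lemref{lem:dofs}'' also needs the incenter step, exactly as in your injectivity argument, since \lemref{lem:dofs} only forces the traces on macro-edges to vanish; this does not affect the validity of the proof.
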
 
\begin{proof}
    We prove that $\mathcal{B}$ is a basis of $\mathbf{V}_h$ by showing that $\text{dim}(\bChar{V}_h)=3|V^M|-1$ along with the linear independence of $\mathcal{B}$.  
The former claim is a counting argument.  
Since $\bChar{V}_h$ is the kernel of the divergence operator on $\bChar{X}_h$, we may use the rank-nullity theorem.  
It is proved in (~\cite{GLN2020}, Theorem 4) that $\text{dim}(\nabla \cdot \bChar{X}_h) = |E^M| +3|\tau_h^M|$.  
Thus 
\begin{multline}
\text{dim}(\bChar{V}_h)= \text{dim}(\bChar{X}_h) - |E^M| -3|\tau_h^M| 
= 2|V| - |E^M| -3|\tau_h^M| \\ 
= 2(|V^M|+|E^M|+|\tau_h^M|) - |E^M| -3|\tau_h^M| \\
= 2|V^M|+ |E^M| -|\tau_h^M| , \label{eqn:dimVh}
\end{multline} 
and we apply the Euler identity $|E^M|-|\tau_h^M| = |V^M|-1$ to conclude $\text{dim}(\bChar{V}_h)=3|V^M|-1 = |\mathcal{B}|$.  
Next, assume $\bChar{v}\in\text{span}(\mathcal{B})$ satisfies 
\[
\bChar{v} = \sum_{i=1}^3 \sum_{k=0}^{|V^M|-1} c_i^{(k)} \bSym{\Phi}_i^{(k)} =\bChar{0} = (0,0), 
\] 
where $c_3^{(0)}=0$ (recall this basis function is excluded from $\mathcal{B}$).  
By construction, we have at the vertices in $\bChar{z}_j \in V^M$ 
\[
\bChar{v}(\bChar{z}_j) = ( c_1^{(j)} ,c_2^{(j)} ) =(0,0), \ 0\leq j \leq |V^M|-1, 
\] 
and we may reduce 
\[
\bChar{v} = \sum_{k=0}^{|V^M|-1} c_3^{(k)} \bSym{\Phi}_3^{(k)} =\bChar{0} = (0,0).
\] 
Note that each coefficient $c_3^{(k)}$ may be associated with a point $\bChar{z}_k\in V^M$; these are joined by the edges in $E^M$ to form a connected graph.  
Given any edge $e\in E^M$ with unit normal $\bChar{n}$ and endpoints $\bChar{z}_n$ and $\bChar{z}_m$, we have 
\[
\int_e \bChar{v}\cdot \bChar{n}\, ds = \pm \left( c_3^{(n)} - c_3^{(m)} \right) =0, 
\] 
and therefore $c_3^{(n)}=c_3^{(m)}$ holds for all values $n$, $m$.  
Since $c_3^{(0)}=0$, we have $c_3^{(k)}=0$ for all $k$.  
This proves linear independence of $\mathcal{B}$.  
Finally, the fact that $\mathcal{B}^0$ is a basis of $\bChar{V}_h^0$ now follows by simply restricting the boundary values to zero.  
\end{proof}

\subsection{Implementation details}\label{sec:implement} 
The correct representation of Dirichlet conditions is now easy to show.  
Recall the decomposition $\bChar{u}_h = \bChar{G}_h +\bChar{w}_h$, where $\bChar{w}_h \in \bChar{V}_h^0$ solves~\eqref{eqn:dffem} and $\bChar{G}_h\in \bChar{V}_h$ satisfies~\defref{defn:gh}.  
The component $\bChar{w}_h$ is computed using the basis $\mathcal{B}^0$.  
We may expand out $\bChar{G}_h$ using $\mathcal{B}\setminus \mathcal{B}^0$; for convenience in exposition, assume the exterior edge ordering $e_k^M\in E_{ext}^M$, $1\leq k\leq |E_{ext}^M|$, such that $e_k^M$ and $e_{k+1}^M$ share a vertex.  
This supports a vertex labelling $\bChar{z}_k\in V^M$, $0\leq k \leq |V^M|-1$, such that each edge $e_k^M\in E_{ext}^M$ has endpoints $\bChar{z}_{k-1}$ and $\bChar{z}_k$.  

We may expand out 
\[
\bChar{G}_h =  \sum_{i=1}^3 \sum_{k=0}^{|V_{ext}^M|-1} c_i^{(k)} \bSym{\Phi}_i^{(k)} .
\] 
Boundary values are still interpolated at the points $\bChar{z}_k\in V_{ext}^M$, corresponding to~\eqref{eqn:ghNodes}: 
\[
\bChar{G}_h \left( \bChar{z}_k \right)  = (c_1^{(k)},c_2^{(k)} ) = \bChar{g} \left( \bChar{z}_k \right), \ 0\leq k \leq |V_{ext}^M|-1 .
\]  
At split points in the Powell-Sabin mesh along the boundary, the values of $\bChar{G}_h$ are determined implicitly through the normal moments~\eqref{eqn:ghMoments}.  
Specifically, we have the system of equations 
\[
\int_{e_k^M} \bChar{G}_h\cdot \bChar{n} \, ds 
= \pm (c_3^{(k)}-c_3^{(k-1)})
= \int_{e_k^M} \bChar{g}\cdot \bChar{n} \, ds , \ 1\leq k \leq |V_{ext}^M|-1 , 
\]
where $\bChar{n}$ is the outward unit normal on $\partial\Omega$ and the sign depends on whether the edges are ordered clockwise or counter-clockwise around the boundary.  
It is not necessary to include case $k=|V_{ext}^M| (=|E_{ext}^M|)$ here because the boundary data must already satisfy the compatability condition 
\[
\nabla \cdot \bChar{u}_h = 0 \Rightarrow \int_{\partial \Omega} \bChar{G}_h\cdot \bChar{n} \, ds =0.
\]
We have $c_3^{(0)}=0$ by convention in~\thmref{thm:Vh}, resulting in a square, lower-triangular system of equations here that is trivial to solve.  

A localized construction of basis functions is possible using a reference map.  
The reference triangle is denoted by $\hat{T}^M$, which has vertices $\hat{\bChar{z}}_1 = (0,0)$, $\hat{\bChar{z}}_2 = (1,0)$ and $\hat{\bChar{z}}_3 = (0,1)$.  
Let $T_k^M\in \tau_{\bChar{z}}^M$.  
We denote by $F_k:\hat{T}^M \to T_k^M$ an affine map onto $T_k^M$ with Jacobian 
$J_k$.  
The reference triangle is then divided into six subtriangles, say $\hat{T}_i$ for $1\leq i \leq 6$.  
This is done by adding four points that are the preimages under $F_k$ of the singular nodes and incenter on $T_k^M$; see~\figref{fig:refMap}.  

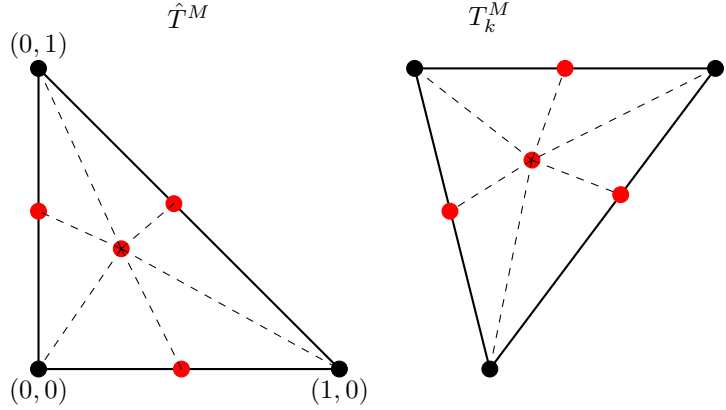
\begin{figure}
\centering 
\begin{tikzpicture}
\coordinate (Z1) at (0,0);
\coordinate (Z2) at (4,0);
\coordinate (Z3) at (0,4);
\draw [thick] (Z1) -- (Z2) -- (Z3) -- cycle;
\filldraw [black] (Z1) circle (3pt); 
\filldraw [black] (Z2) circle (3pt); 
\filldraw [black] (Z3) circle (3pt); 
\tkzLabelPoint[below](Z1){$(0,0)$}
\tkzLabelPoint[below](Z2){$(1,0)$}
\tkzLabelPoint[above](Z3){$(0,1)$}
\coordinate (Z12) at (1.9,0);
\coordinate (Z23) at (1.8,2.2);
\coordinate (Z30) at (0,2.1);
\coordinate (Zc) at (1.1,1.6);
\filldraw [red] (Z12) circle (3pt); 
\filldraw [red] (Z23) circle (3pt); 
\filldraw [red] (Z30) circle (3pt); 
\filldraw [red] (Zc) circle (3pt); 
\draw [dashed, black] (Zc) -- (Z1);
\draw [dashed, black] (Zc) -- (Z12);
\draw [dashed, black] (Zc) -- (Z2);
\draw [dashed, black] (Zc) -- (Z23);
\draw [dashed, black] (Zc) -- (Z3);
\draw [dashed, black] (Zc) -- (Z30);
\coordinate (B) at (6,0);
\coordinate (C) at (5,4);
\coordinate (E) at (9,4);
\filldraw [black] (B) circle (3pt); 
\filldraw [black] (C) circle (3pt); 
\filldraw [black] (E) circle (3pt); 
\coordinate (F) at (6.56,2.78);
\filldraw [red] (F) circle (3pt); 
\draw [thick] (B) -- (E) -- (C) -- cycle;
\draw [dashed, black] (E) -- (F);
\draw [dashed, black] (B) -- (F);
\draw [dashed, black] (C) -- (F);
\coordinate (ZTop) at (2,5);
\tkzLabelPoint[below](ZTop){$\hat{T}^M$}
\coordinate (Z2Top) at (6,5);
\tkzLabelPoint[below](Z2Top){$T_k^M$}
\coordinate (BE) at (7.74,2.32);
\coordinate (CE) at (7,4);
\coordinate (BC) at (5.47,2.1);
\draw [dashed, black] (F) -- (BC);
\draw [dashed, black] (F) -- (BE);
\draw [dashed, black] (F) -- (CE);
\filldraw [red] (BC) circle (3pt); 
\filldraw [red] (CE) circle (3pt); 
\filldraw [red] (BE) circle (3pt); 
\end{tikzpicture}
\caption{A localized construction of basis functions can be made explicit using a reference mapping $F_k : \hat{T}^M\to T_k^M$.  The mapping $F_k$ is defined as an  affine function that maps the vertices of $\hat{T}^M$ to those of $T_k^M$ (black). 
 The additional points used to subdivide $\hat{T}^M$ are the preimages under $F_k$ of the singular vertices and incenter on $T_k^M$ (red).}\label{fig:refMap}
\end{figure}

Functions $\bChar{v}_h\in \bChar{V}_h$ will be represented locally on $T_k^M$ by leveraging the Piola transformation
\begin{equation}
    \bChar{v}_h = \frac{1}{|\text{det}(J_k)|} J_k \hat{\bChar{v}}_h . 
   \label{eqn:Piola}
\end{equation}
As shown in the proof of~\lemref{lem:localBasis}, $\mathbf{v}_h$ can be determined uniquely on $T_k^M$ via three degrees of freedom using the basis~\eqref{eqn:Phi12}-\eqref{eqn:Phi3}.  
Given that $F_k$ maps reference vertex $\hat{\bChar{z}}_j$ to $\bChar{z}$, we can associate three basis functions on the reference element, say $\hat{\bSym{\Phi}}_i^{(j)}$, $1\leq i\leq 3$, such that the divergence (in reference coordinates) is $\hat{\nabla} \cdot \hat{\bSym{\Phi}}_i^{(j)} =0$, with 
\begin{align}
    \hat{\bSym{\Phi}}_i^{(j)} (\hat{\bChar{z}}_j) &= (\hat{\alpha}_i , \hat{\beta}_i ) \label{eqn:hatPhi12} \\ 
    \text{and} \quad 
    \int_{\hat{e}_j^M} \hat{\bSym{\Phi}}_i^{(j)} \cdot \hat{\bChar{n}}_j \, d\hat{s} &= \hat{\delta}_i . \label{eqn:hatPhi3} 
\end{align}
Here, $\hat{e}_j^M$ is the edge of $\hat{T}^M$ between $\hat{\bChar{z}}_{j-1}$ and $\hat{\bChar{z}}_{j}$ (modulo $3$), and $\hat{\bChar{n}}_j$ is the outward-pointing unit normal vector along that edge (see~\figref{fig:macroLabels}).  
We choose $(\hat{\alpha}_1 , \hat{\beta}_1 , \hat{\delta}_1) = (1,0,0)$, 
$(\hat{\alpha}_2 , \hat{\beta}_2 , \hat{\delta}_2) = (0,1,0)$, and 
$(\hat{\alpha}_3 , \hat{\beta}_3 , \hat{\delta}_3) = (0,0,1)$.  

Since there are three such functions for each reference vertex, we have a total 
of nine local basis functions on $\hat{T}^M$.  
These correspond to the three vertices of $T_k^M$.  
However, care is needed to map between the basis functions when changing variables.  Given the values 
\begin{equation*}
 (a_i , b_i)  = |\text{det}(J_k)| (J_k)^{-1} \bSym{\Phi}_i (\bChar{z}), \ i = 1,\, 2, 
\end{equation*} 
the correct mappings are 
\begin{align*}
     \bSym{\Phi}_1 |_{T_k^M} &= \frac{a_1}{|\text{det}(J_k)|} J_k \hat{\bSym{\Phi}}_1^{(j)} +\frac{b_1}{|\text{det}(J_k)|} J_k \hat{\bSym{\Phi}}_2^{(j)}  , \\ 
          \bSym{\Phi}_2 |_{T_k^M} &= \frac{a_2}{|\text{det}(J_k)|} J_k \hat{\bSym{\Phi}}_1^{(j)} +\frac{b_2}{|\text{det}(J_k)|} J_k \hat{\bSym{\Phi}}_2^{(j)}  , \\ 
          \text{and} \ 
          \bSym{\Phi}_3 |_{T_k^M} &= \frac{1}{|\text{det}(J_k)|} J_k \hat{\bSym{\Phi}}_3^{(j)} .
\end{align*}
The Piola transformation is well-known to preserve the divergence and the integral of the normal component along edges, so it is easy to verify the relationships above.  

Let $\hat{\bChar{z}}_i^s$, $1\leq i \leq 3$, satisfy the requirements that $\hat{\bChar{z}}_i^s$ lies on $\hat{e}_i^M$ and $\hat{\bChar{z}}_i^s = (F_k)^{-1} (\bChar{z}_i^s)$ for some singular vertex $\bChar{z}_i^s\in S$ that lies on an edge of $T_k^M$.  
Also, let $\hat{\bChar{z}}^c = (F_k)^{-1} (\bChar{z}^c)$, where $\bChar{z}^c$ is the incenter for $T_k^M$.  
Labels for points on $\hat{T}^M$ are shown in~\figref{fig:macroLabels}.  
In order to describe each $\hat{\bSym{\Phi}}_i^{(j)}$ explicitly, it suffices to list its values at the seven points $\hat{\bChar{z}}_i$ and $\hat{\bChar{z}}_i^s$, $1\leq i\leq 3$, and $\hat{\bChar{z}}^c$.  
Given that $\hat{\bSym{\Phi}}_i^{(j)} = (\hat{u}_i^{(j)},\hat{v}_i^{(j)})$, $\hat{\bChar{z}}^c = (\hat{x}^c , \hat{y}^c)$ and $\hat{\bChar{z}}_i^s = (\hat{x}_i^s , \hat{y}_i^s )$, these values are provided in~\tabref{tab:phi1} ($j=1$),~\tabref{tab:phi2} ($j=2$), and~\tabref{tab:phi3} ($j=3$). 

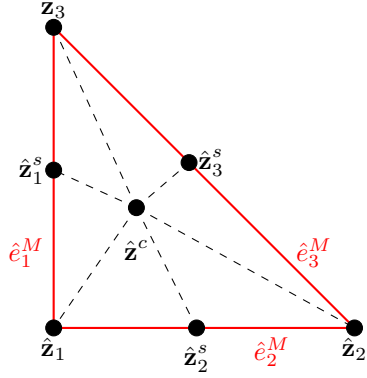
\begin{figure}
\centering 
\begin{tikzpicture}
\coordinate (Z1) at (0,0);
\coordinate (Z2) at (4,0);
\coordinate (Z3) at (0,4);
\draw [thick, red] (Z1) -- (Z2) -- (Z3) -- cycle;
\filldraw [black] (Z1) circle (3pt); 
\filldraw [black] (Z2) circle (3pt); 
\filldraw [black] (Z3) circle (3pt); 
\tkzLabelPoint[below](Z1){$\hat{\bChar{z}}_1$}
\tkzLabelPoint[below](Z2){$\hat{\bChar{z}}_2$}
\tkzLabelPoint[above](Z3){$\hat{\bChar{z}}_3$}
\coordinate (Z12) at (1.9,0);
\coordinate (Z23) at (1.8,2.2);
\tkzLabelPoint[above, right](Z23){$\hat{\bChar{z}}_3^s$}
\coordinate (Z30) at (0,2.1);
\tkzLabelPoint[left](Z30){$\hat{\bChar{z}}_1^s$}
\coordinate (Zc) at (1.1,1.6);
\filldraw [black] (Z12) circle (3pt); 
\filldraw [black] (Z23) circle (3pt); 
\filldraw [black] (Z30) circle (3pt); 
\filldraw [black] (Zc) circle (3pt); 
\coordinate (E1) at (0,1.0);
\tkzLabelPoint[left, red](E1){$\hat{e}_1^M$}
\coordinate (E2) at (2.9,0);
\tkzLabelPoint[below, red](E2){$\hat{e}_2^M$}
\coordinate (E3) at (3.1,1.0);
\tkzLabelPoint[above, right, red](E3){$\hat{e}_3^M$}
\draw [dashed, black] (Zc) -- (Z1);
\draw [dashed, black] (Zc) -- (Z12);
\draw [dashed, black] (Zc) -- (Z2);
\draw [dashed, black] (Zc) -- (Z23);
\draw [dashed, black] (Zc) -- (Z3);
\draw [dashed, black] (Zc) -- (Z30);
\coordinate (ZcLabel) at (1.1,1.3);
\tkzLabelPoint[below](ZcLabel){$\hat{\bChar{z}}^c$}
\coordinate (Z12Label) at (1.9,-0.1);
\tkzLabelPoint[below](Z12Label){$\hat{\bChar{z}}_2^s$}
\end{tikzpicture}
\caption{Labels for some edges (red) and points on $\hat{T}^M$.}\label{fig:macroLabels}
\end{figure}


\begin{table}
\centering 
    \begin{tabular}{c|ccccccc}
{} & $\hat{\bChar{z}}^c$ & $\hat{\bChar{z}}_1^s$ & $\hat{\bChar{z}}_2^s$ & $\hat{\bChar{z}}_3^s$ & $\hat{\bChar{z}}_1$ &  $\hat{\bChar{z}}_2$ &  $\hat{\bChar{z}}_3$ \\\hline
$\hat{u}_1^{(1)}$ & $-\hat{y}^c$ & $-\hat{y}_1^s$ & $1-\hat{x}_2^s$ & 0 & 1 & 0 & 0 \\ 
$\hat{v}_1^{(1)}$ & $\hat{y}^c$ & $\frac{\hat{y}_1^s -\hat{y}^c}{\hat{x}^c}$ & 0 & 0 & 0 & 0 & 0 \\ 
$\hat{u}_2^{(1)}$ & $\hat{x}^c$ & 0 & $\frac{\hat{x}_2^s -\hat{x}^c}{\hat{y}^c}$ & 0 & 0 & 0 & 0 \\ 
$\hat{v}_2^{(1)}$ & $-\hat{x}^c$ & $1-\hat{y}_1^s$ & $-\hat{x}_2^s$ & 0 & 1 & 0 & 0 \\ 
$\hat{u}_3^{(1)}$ & -2 & -2 & 2$\frac{\hat{x}^c -\hat{x}_2^s}{\hat{y}^c}$ & 0 & 0 & 0 & 0 \\ 
$\hat{v}_3^{(1)}$ & 2 & 2$\frac{\hat{y}_1^s -\hat{y}^c}{\hat{x}^c}$ & 2 & 0 & 0 & 0 & 0
    \end{tabular}
    \caption{\label{tab:phi1} Tabulated values of  $(\hat{u}_i^{(1)},\hat{v}_i^{(1)})=\hat{\bSym{\Phi}}_i^{(1)}$, $1\leq i\leq 3$, at the points shown in~\figref{fig:macroLabels}.}
\end{table}

\begin{table}
\centering 
    \begin{tabular}{c|ccccccc}
{} & $\hat{\bChar{z}}^c$ & $\hat{\bChar{z}}_1^s$ & $\hat{\bChar{z}}_2^s$ & $\hat{\bChar{z}}_3^s$ & $\hat{\bChar{z}}_1$ &  $\hat{\bChar{z}}_2$ &  $\hat{\bChar{z}}_3$ \\\hline
$\hat{u}_1^{(1)}$ & 0 & 0 & $\hat{x}_2^s$ & $\hat{x}_3^s+\frac{\hat{x}^c - \hat{x}_3^s}{1-\hat{x}^c -\hat{y}^c}$ & 0 & 1 & 0  \\ 
$\hat{v}_1^{(1)}$ & $-\hat{y}^c$ & 0 & 0 & $\frac{\hat{y}^c - \hat{y}_3^s}{1-\hat{x}^c -\hat{y}^c}$ & 0 & 0 & 0  \\ \
$\hat{u}_2^{(1)}$ & 0 & 0 & $\frac{\hat{x}_2^s -\hat{x}^c}{\hat{y}^c}$ & $\frac{\hat{x}^c-\hat{x}_3^s}{1-\hat{x}^c -\hat{y}^c}$ & 0 & 0 & 0  \\ 
$\hat{v}_2^{(1)}$ & $\hat{x}^c - 1$ & 0 & $\hat{x}_2^s -1$ & $\hat{x}_3^s+\frac{\hat{y}^c-\hat{y}_3^s}{1-\hat{x}^c -\hat{y}^c}$ & 0 & 1 & 0  \\ 
$\hat{u}_3^{(1)}$ & 0 & 0 & 2$\frac{\hat{x}_2^s -\hat{x}^c}{\hat{y}^c}$ & 2$\frac{\hat{x}^c-\hat{x}_3^s}{1-\hat{x}^c -\hat{y}^c}$ & 0 & 0 & 0  \\ 
$\hat{v}_3^{(1)}$ & -2 & 0 & -2 & 2$\frac{\hat{y}^c-\hat{y}_3^s}{1-\hat{x}^c -\hat{y}^c}$ & 0 & 0 & 0  
    \end{tabular}
    \caption{\label{tab:phi2} Tabulated values of  $(\hat{u}_i^{(2)},\hat{v}_i^{(2)})=\hat{\bSym{\Phi}}_i^{(2)}$, $1\leq i\leq 3$, at the points shown in~\figref{fig:macroLabels}.}
\end{table}

\begin{table}
\centering 
    \begin{tabular}{c|ccccccc}
{} & $\hat{\bChar{z}}^c$ & $\hat{\bChar{z}}_1^s$ & $\hat{\bChar{z}}_2^s$ & $\hat{\bChar{z}}_3^s$ & $\hat{\bChar{z}}_1$ &  $\hat{\bChar{z}}_2$ &  $\hat{\bChar{z}}_3$ \\\hline
$\hat{u}_1^{(1)}$ & $\hat{y}^c -1$ & $\hat{y}_1^s -1$ & 0 & $\frac{\hat{y}_3^s -\hat{y}^c}{1-\hat{x}^c -\hat{y}^c} -\hat{x}_3^s$ & 0 & 0 & 1  \\ 
$\hat{v}_1^{(1)}$ & 0 & $\frac{\hat{y}_1^s -\hat{y}^c}{\hat{x}^c}$ & 0 & $\frac{\hat{y}^c -\hat{y}_3^s}{1-\hat{x}^c -\hat{y}^c}$ & 0 & 0 & 0  \\ \
$\hat{u}_2^{(1)}$ & $-\hat{x}^c$ & 0 & 0 & $\frac{\hat{x}^c -\hat{x}_3^s}{1-\hat{x}^c -\hat{y}^c}$ & 0 & 0 & 0  \\ 
$\hat{v}_2^{(1)}$ & 0 & $\hat{y}_1^s$ & 0 & $\frac{\hat{x}_3^s -\hat{x}^c}{1-\hat{x}^c -\hat{y}^c}-\hat{x}_3^s$ & 0 & 0 & 1  \\ 
$\hat{u}_3^{(1)}$ & 2 & 2 & 0 & 2$\frac{\hat{x}_3^s -\hat{x}^c}{1-\hat{x}^c -\hat{y}^c}$ & 0 & 0 & 0  \\ 
$\hat{v}_3^{(1)}$ & 0 & 2$\frac{\hat{y}^c-\hat{y}_1^s}{\hat{x}^c}$ & 0 & 2$\frac{\hat{y}_3^s -\hat{y}^c}{1-\hat{x}^c -\hat{y}^c}$ & 0 & 0 & 0  
    \end{tabular}
    \caption{\label{tab:phi3} Tabulated values of  $(\hat{u}_i^{(3)},\hat{v}_i^{(3)})=\hat{\bSym{\Phi}}_i^{(3)}$, $1\leq i\leq 3$, at the points shown in~\figref{fig:macroLabels}.}
\end{table}

While the construction has been localized to $T_k^M$, it is important to note that the basis functions $\hat{\bSym{\Phi}}_i^{(j)}$ depend on the locations of the singular vertices and incenter of $T_k^M$.   
In implementation, assembly may be performed by looping over triangles in $\tau_h^M$ with a subloop over the six subtriangles of the Powell-Sabin split, computing nodal values locally for all basis functions.  
In turn, once these nodal values are found, each $\bSym{\Phi}_i$ has an obvious representation in the standard nodal basis used in continuous, piecewise linear finite element methods, so integration on each triangle can leverage classical techniques.  
Also, since the values of $\bSym{\Phi}_i$ at $\bChar{z}$ are defined already by~\lemref{lem:localBasis}, it is only necessary to compute basis values at incenters and singular vertices.  
Values at singular vertices can be stored to avoid recomputation when looping over triangles in $\tau_h^M$.    
The ultimate cost to assemble the linear system and solve is investigated in~\secref{sec:comps} via comparison with the classical saddle-point approach, which does not use the solenoidal basis.

\section{Computation of pressure from velocity}\label{sec:pressure} 
It is possible to compute $p_h$ after $\bChar{u}_h$ has been computed, if desired, by solving the problem posed in~\eqref{eqn:dfPressure}. 
Bases for $\bChar{X}_h^0\setminus \bChar{V}_h^0$ and $P_h$ will be constructed for this purpose in this section.  
The next result is used later to verify the correct number of basis functions, and also shows that the linear system for the pressure solve is square.  
\begin{lem}\label{lem:dimPh}
It holds that $\dim(\bChar{X}_h^0\setminus\bChar{V}_h^0) = \dim(P_h) = 2|\tau_h^M| + 2|E_{int}^M|-|V_i^M|$.
\end{lem}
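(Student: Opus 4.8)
\section*{Proof plan for \lemref{lem:dimPh}}

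The plan is to compute the two dimensions separately and show each equals $2|\tau_h^M| + 2|E_{int}^M| - |V_{int}^M|$; the equality between them then follows. The quantity $\bChar{X}_h^0\setminus\bChar{V}_h^0$ denotes an algebraic complement of $\bChar{V}_h^0$ in $\bChar{X}_h^0$, and since $\bChar{V}_h^0 = \{\bChar{v}_h \in \bChar{X}_h^0 : \nabla\cdot\bChar{v}_h = 0\}$ is exactly the kernel of the divergence on $\bChar{X}_h^0$, the rank--nullity theorem gives $\dim(\bChar{X}_h^0\setminus\bChar{V}_h^0) = \dim\bChar{X}_h^0 - \dim\bChar{V}_h^0 = \dim(\nabla\cdot\bChar{X}_h^0)$. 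This first dimension is the easy one.

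First I would count $\dim\bChar{X}_h^0$. The space $\bChar{X}_h$ is the continuous piecewise-linear vector field space on $\tau_h$, so its dimension is twice the number of vertices of $\tau_h$; imposing zero boundary data removes twice the number of boundary vertices, leaving twice the number of interior vertices. The interior vertices of $\tau_h$ are the interior macro-vertices, the interior singular vertices (one per interior macro-edge, by~\eqref{eqn:card}), and all incenters, so $\dim\bChar{X}_h^0 = 2(|V_{int}^M| + |E_{int}^M| + |\tau_h^M|)$. By~\thmref{thm:Vh}, $\mathcal{B}^0$ is a basis of $\bChar{V}_h^0$ consisting of one triple of functions per interior macro-vertex, so $\dim\bChar{V}_h^0 = 3|V_{int}^M|$. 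Subtracting yields $\dim(\bChar{X}_h^0\setminus\bChar{V}_h^0) = 2|\tau_h^M| + 2|E_{int}^M| - |V_{int}^M|$.

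Next I would compute $\dim P_h$ intrinsically. Its elements lie in the $6|\tau_h^M|$-dimensional space of functions constant on each subtriangle of $\tau_h$, subject to the single mean constraint $\int_\Omega q_h\,d\bChar{x} = 0$ together with the $|S| = |E^M|$ constraints $\theta_{\bChar{z}}(q_h) = 0$, $\bChar{z}\in S$. Thus $\dim P_h = 6|\tau_h^M| - 1 - |E^M|$ provided these $1 + |E^M|$ functionals are independent, which is the main obstacle. I would establish independence from the structural fact that each Powell--Sabin subtriangle has exactly one singular vertex as a corner; hence the value on a subtriangle $T$ enters exactly one $\theta_{\bChar{z}}$, with coefficient $\pm 1$, besides the mean. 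Supposing a combination $\alpha\int_\Omega(\cdot)\,d\bChar{x} + \sum_{\bChar{z}}\beta_{\bChar{z}}\theta_{\bChar{z}}$ vanishes on all piecewise constants, reading off the coefficient of each subtriangle around an interior singular vertex (signs $+,-,+,-$) forces $\alpha|T_1| = -\alpha|T_2|$; positivity of areas then forces $\alpha = 0$, after which every $\beta_{\bChar{z}}$ vanishes in turn. Hence $\dim P_h = 6|\tau_h^M| - 1 - |E^M|$.

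Finally I would reconcile the two counts. Using the boundary-cycle identity $|V_{ext}^M| = |E_{ext}^M|$, the edge--face incidence relation $3|\tau_h^M| = 2|E_{int}^M| + |E_{ext}^M|$, and the Euler identity $|E^M| - |\tau_h^M| = |V^M| - 1$ (equivalently $|V_{int}^M| - |E_{int}^M| + |\tau_h^M| = 1$), a short computation rewrites $6|\tau_h^M| - 1 - |E^M|$ as $2|\tau_h^M| + 2|E_{int}^M| - |V_{int}^M|$, matching the count above. This proves both equalities. I expect the independence of the $\theta_{\bChar{z}}$ constraints to be the only non-routine step; note that it also yields, via $\nabla\cdot\bChar{X}_h^0\subseteq P_h$ (from \lemref{lem:discDiv}) together with the equal dimensions, the surjectivity $\nabla\cdot\bChar{X}_h^0 = P_h$ as a by-product.
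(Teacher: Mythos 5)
Your proposal is correct, and it reaches both counts by valid arguments; the interesting point is that it is only half the same as the paper's proof. The first half is identical: the paper also computes $\dim(\bChar{X}_h^0) = 2(|V_i^M|+|E_{int}^M|+|\tau_h^M|)$ by grouping interior vertices of $\tau_h$ as interior macro-vertices, interior singular vertices, and incenters, subtracts $\dim(\bChar{V}_h^0)=3|V_i^M|$ from \thmref{thm:Vh}, and interprets the set difference as a complement. Where you genuinely diverge is the pressure count: the paper does not prove independence of the constraints $\theta_{\bChar{z}}$ at all, but instead invokes the result of \cite{Fabien_2022} that the constrained piecewise-constant space contributes three degrees of freedom per interior singular point and one per exterior singular point, giving $\dim(P_h)=3|E_{int}^M|+|E_{ext}^M|-1$ directly, after which it applies the same incidence relation $3|\tau_h^M|=2|E_{int}^M|+|E_{ext}^M|$ and Euler identity you use. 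Your intrinsic count $\dim(P_h)=6|\tau_h^M|-1-|E^M|$, justified by showing the functionals $q\mapsto\int_\Omega q\,d\bChar{x}$ and $q\mapsto\theta_{\bChar{z}}(q)$, $\bChar{z}\in S$, are linearly independent, is a self-contained replacement for that citation, and the key structural observation is sound: each subtriangle of $\tau_h$ has exactly one singular corner (the paper guarantees singular vertices occur only on macro-edges), so each cell value enters exactly one $\theta_{\bChar{z}}$ with sign $\pm1$, and the alternating signs on consecutive cells around any singular vertex force $\alpha(|T_1|+|T_2|)=0$, hence $\alpha=0$ and then every $\beta_{\bChar{z}}=0$; one checks easily that $6|\tau_h^M|-1-|E^M| = 3|E_{int}^M|+|E_{ext}^M|-1$, so the two counts agree. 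What the paper's route buys is brevity; what yours buys is independence from the external dimension count, plus the by-product you mention, $\nabla\cdot\bChar{X}_h^0=P_h$, which the paper only obtains later in \thmref{thm:Ph} through the explicit basis $\nabla\cdot\mathcal{S}$.
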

\begin{proof}
The pressure space is piecewise constant on $\tau_h$, but with the constraints imposed by~\lemref{lem:discDiv}, which remove a degree of freedom for each singular point.    
As proved in~\cite{Fabien_2022}, an equivalent way to count is three basis functions per interior singular point and one per exterior singular point.  
Along with the constraint $P_h\subset L^2_0 (\Omega)$, it follows from the correspondence between singular points and edges of $\tau_h^M$ that $\dim(P_h) = 3|E_{int}^M| +|E_{ext}^M|-1$.  
We count three edges per triangle, so including multiplicity for interior edges we find $3|\tau_h^M| = 2|E_{int}^M| +|E_{ext}^M|$, thus $\dim(P_h) = 3|\tau_h^M| +|E_{int}^M|-1$. 
Insert the Euler identity in the form 
\[
1 = |\tau_h^M| +|V^M| - |E^M| = |\tau_h^M| +|V_i^M| - |E_{int}^M|
\]
and we find $\dim(P_h) = 2|\tau_h^M| + 2|E_{int}^M|-|V_i^M|$.  

Now refer to~\thmref{thm:Vh}: $|\mathcal{B}^0| = \dim(\bChar{V}_h^0) = 3|V_i^M|$.  
All interior vertices on the Powell-Sabin split contribute toward the dimension of $\bChar{X}_h^0$; we may group the vertices as $V_i^M$ plus interior singular vertices and incenters of triangles in $\tau_h^M$, so that 
\begin{multline*} 
\dim(\bChar{X}_h^0)=2(|V_i^M| +|E_{int}^M| +|\tau_h^M|) \\
\Rightarrow 
\dim(\bChar{X}_h^0 \setminus \bChar{V}_h^0)=  2|\tau_h^M| + 2|E_{int}^M|-|V_i^M| =\dim(P_h) .
\end{multline*}
\end{proof}

The construction of basis functions builds upon the scalar finite element space 
\[
S_h = \left\{ \lambda \in \mathcal{C} (\overline{\Omega}) \, | \, \lambda|_T \in \mathbb{P}_1 (T),\ \forall T\in\tau_h \right\} .
\]
Let $\{\lambda_\bChar{z}\}$ for all vertices $\bChar{z}\in V$ be the usual Lagrangian (nodal) basis for $S_h$, such that for a given indexing $\{ \bChar{z}_i\}$ of $V$ we have $\lambda_{\bChar{z}_j} (\bChar{z}_k)=\delta_{jk}$.  
Next, we denote the singular points on the edges $e_j^M\in E_{int}^M$ as $\bChar{z}_s^{(j)}$ for $1\leq j \leq |E_{int}^M|$.  
On each edge $e_j^M$, associate a unit normal vector $\bChar{n}_j$ and a unit tangent vector $\bChar{t}_j$.  
The basis construction will make use of some functions 
\begin{equation*}
    \bSym{\psi}_s^{(j)} = \lambda_{\bChar{z}_s^{(j)}}\, \bChar{n}_j \quad 
\text{and} \quad \bSym{\phi}_s^{(j)} = \lambda_{\bChar{z}_s^{(j)}}\, \bChar{t}_j .
\end{equation*} 
The support of these functions is the set of four triangles $T\in\tau_h$ with the common interior singular vertex $\bChar{z}_s^{(j)}$.  
If we denote the incenters of each triangle $T_k^M\in \tau_h^M$ by $\bChar{z}_c^{(k)}$ for $1\leq k\leq |\tau_h^M|$, then the following are also needed: 
\begin{align*}
    \bSym{\psi}_c^{(k)} = \lambda_{\bChar{z}_c^{(k)}}\, (1,0) \quad 
\text{and}\quad  \bSym{\phi}_c^{(k)} = \lambda_{\bChar{z}_c^{(k)}}\, (0,1).
\end{align*} 
The support of these functions is the set of six triangles $T\in\tau_h$ with the common vertex $\bChar{z}_c^{(k)}$.  

Some functions above must be excluded to extract a basis.  
This must be done carefully.  
First, identify one vertex $\bChar{z}_0\in V_{ext}^M$ (that lies on $\partial\Omega$; see also~\thmref{thm:Vh}), and group it together with the interior vertices $\bChar{v}_j \in V_i^M$, $1\leq j \leq |V_i^M|$.  
There exists some subset of edges in $E_{int}^M$, say $e_k^M$ that we may index (without loss of generality) as $1\leq k \leq N$, such that these edges form a spanning tree for the graph created by the vertex set $\{ \bChar{z}_0 , \ldots , \bChar{z}_{|V_i^M|} \}$ and all of the edges that have both endpoints $\bSym{\zeta}_1, \bSym{\zeta}_2 \in \{ \bChar{z}_0 , \ldots , \bChar{z}_{|V_i^M|} \}$. A spanning tree of a connected graph is itself a connected subgraph that touches every vertex in the graph without creating any cycles.  These particular vertices and edges form a finite connected graph, so a spanning tree exists. 
It is well-known that $N = |V_i^M|$.  We compute a spanning tree using Kruskal's algorithm \cite{kruskal1956} for implementation. 
\begin{thm}\label{thm:Ph}
    Given the indexing above, the following set $\mathcal{S}$ is a basis of $\bChar{X}_h^0 \setminus \bChar{V}_h^0$: 
    \[
\mathcal{S} = \left\{ \bSym{\psi}_s^{(j)} \right\}_{j=|V_i^M|+1}^{|E_{int}^M|}\bigcup \left\{ \bSym{\phi}_s^{(j)} \right\}_{j=1}^{|E_{int}^M|} \bigcup \left\{ \bSym{\psi}_c^{(k)} \right\}_{k=1}^{|\tau_h^M|} \bigcup \left\{ \bSym{\phi}_c^{(k)} \right\}_{k=1}^{|\tau_h^M|} .
    \]
Furthermore, $\nabla \cdot \mathcal{S}$ is a basis of $P_h$.  
\end{thm}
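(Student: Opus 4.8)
The plan is to prove both assertions at once by reducing them to a single linear-independence statement. First I would record that $\nabla\cdot$ maps $\bChar{X}_h^0$ into $P_h$: for $\bChar{v}_h\in\bChar{X}_h^0$ the divergence is piecewise constant, satisfies the singular-vertex constraints by \lemref{lem:discDiv}, and has zero mean since $\int_\Omega\nabla\cdot\bChar{v}_h\,d\bChar{x}=\int_{\partial\Omega}\bChar{v}_h\cdot\bChar{n}\,ds=0$. Because every node appearing in $\mathcal{S}$ (interior singular vertices and incenters) is interior, each member of $\mathcal{S}$ vanishes on $\partial\Omega$, so $\mathcal{S}\subset\bChar{X}_h^0$ and $\nabla\cdot\mathcal{S}\subset P_h$. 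I would then note that $\mathcal{S}$ is trivially linearly independent in $\bChar{X}_h^0$: each function is nodal at a distinct vertex, and the two functions sharing a singular vertex (or incenter) carry the independent values $\bChar{n}_j,\bChar{t}_j$ (respectively $(1,0),(0,1)$), so evaluating a vanishing combination at each node forces all coefficients to zero. Counting the four families gives $|\mathcal{S}|=2|\tau_h^M|+2|E_{int}^M|-|V_i^M|=\dim(P_h)=\dim(\bChar{X}_h^0\setminus\bChar{V}_h^0)$ by \lemref{lem:dimPh}.

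With these facts, both claims follow once I show $\mathrm{span}(\mathcal{S})\cap\bChar{V}_h^0=\{\bChar{0}\}$. Indeed this yields $\mathrm{span}(\mathcal{S})\oplus\bChar{V}_h^0=\bChar{X}_h^0$ by the dimension count, so $\mathcal{S}$ is a basis of the complement; and it makes $\nabla\cdot$ injective on $\mathrm{span}(\mathcal{S})$, so $\nabla\cdot\mathcal{S}$ is a linearly independent subset of $P_h$ of cardinality $\dim(P_h)$, hence a basis. The core step is therefore to take $\bChar{w}\in\mathrm{span}(\mathcal{S})$ with $\nabla\cdot\bChar{w}=\bChar{0}$ and show $\bChar{w}=\bChar{0}$.

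Since every function in $\mathcal{S}$ is nodal at a singular vertex or incenter, $\bChar{w}$ vanishes at all macro-vertices. As $\bChar{w}\in\bChar{V}_h^0$, I expand it in the basis $\mathcal{B}^0$ of \thmref{thm:Vh}; the vanishing vertex values kill the coefficients of $\bSym{\Phi}_1^{(k)},\bSym{\Phi}_2^{(k)}$, leaving $\bChar{w}=\sum_{\bChar{z}_k\in V_i^M}c_3^{(k)}\bSym{\Phi}_3^{(k)}$. Here is where the spanning tree enters: on a tree edge $e_j^M$ ($1\le j\le|V_i^M|$) the normal function $\bSym{\psi}_s^{(j)}$ is excluded from $\mathcal{S}$, so the $\mathcal{S}$-expansion forces $\bChar{w}(\bChar{z}_s^{(j)})\cdot\bChar{n}_j=0$; since $\bChar{w}\cdot\bChar{n}_j$ is piecewise linear along $e_j^M$ and vanishes at its macro-endpoints, $\int_{e_j^M}\bChar{w}\cdot\bChar{n}_j\,ds=\tfrac12|e_j^M|\,\bChar{w}(\bChar{z}_s^{(j)})\cdot\bChar{n}_j=0$. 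On the other hand, the moment identity established inside the proof of \thmref{thm:Vh} gives $\int_{e_j^M}\bChar{w}\cdot\bChar{n}_j\,ds=\pm\bigl(c_3^{(n_j)}-c_3^{(m_j)}\bigr)$, where $\bChar{z}_{n_j},\bChar{z}_{m_j}$ are the endpoints of $e_j^M$ and $c_3^{(\cdot)}=0$ at the boundary vertex $\bChar{z}_0$. Hence $c_3$ is constant along every tree edge; since the tree spans $\{\bChar{z}_0\}\cup V_i^M$ and $c_3$ is anchored to zero at $\bChar{z}_0$, connectivity propagates $c_3^{(k)}=0$ to every interior vertex, so $\bChar{w}=\bChar{0}$. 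Linear independence of $\mathcal{S}$ then forces all expansion coefficients of $\bChar{w}$ to vanish, giving $\mathrm{span}(\mathcal{S})\cap\bChar{V}_h^0=\{\bChar{0}\}$.

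I expect the spanning-tree step to be the main obstacle. The delicate points are: verifying that the moment relation from \thmref{thm:Vh} remains valid with a boundary endpoint contributing $c_3=0$, so that a tree edge touching $\bChar{z}_0$ still forces its interior endpoint's coefficient to vanish; confirming that excluding exactly the $|V_i^M|$ tree edges' normal functions matches the index range $j>|V_i^M|$ prescribed for $\bSym{\psi}_s^{(j)}$ in $\mathcal{S}$; and exploiting the tree's connectivity and acyclicity to collapse every $c_3^{(k)}$ to the anchored value. The remaining items—surjectivity of $\nabla\cdot$ onto $P_h$, the zero-mean and \lemref{lem:discDiv} membership, and the dimension bookkeeping—are routine once \lemref{lem:dimPh} is in hand.
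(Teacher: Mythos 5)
Your proposal is correct and follows essentially the same route as the paper: the dimension count from \lemref{lem:dimPh}, the reduction of a divergence-free member of $\mathrm{span}(\mathcal{S})$ to $\sum_k c_3^{(k)}\bSym{\Phi}_3^{(k)}$ via its vanishing macro-vertex values, the vanishing normal moments on the spanning-tree edges (possible precisely because the tree edges' normal functions are excluded from $\mathcal{S}$), and the propagation of $c_3^{(k)}=0$ from the boundary anchor $\bChar{z}_0$ through the tree. Your two minor variations---packaging both claims into the single statement $\mathrm{span}(\mathcal{S})\cap\bChar{V}_h^0=\{\bChar{0}\}$, and obtaining the zero tree-edge moments from $\bChar{w}(\bChar{z}_s^{(j)})\cdot\bChar{n}_j=0$ plus a trapezoid-rule evaluation rather than from term-by-term moments of the functions in $\mathcal{S}$---are cosmetic (indeed slightly tighter) restatements of the paper's argument.
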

\begin{proof}
Indeed, by~\lemref{lem:dimPh} we have the correct number of basis functions.  
Note that the functions in $\mathcal{S}$ are equivalent to a subset of the standard nodal basis functions typically used for the globally continuous, piecewise linear velocity space on $\tau_h$ except that at singular points the vectors are expressed in the local basis $\{ \bChar{n}_j , \bChar{t}_j \}$ of $\reals^2$.  
Since the properties of these basis functions are well-known, it is immediately clear that $\mathcal{S}\subset \bChar{X}_h^0\setminus\bChar{V}_h^0$ by construction and that $\mathcal{S}$ is linearly independent.   Therefore, it is a basis for $\bChar{X}_h^0\setminus\bChar{V}_h^0$. 

Note that each function in $\nabla \cdot \mathcal{S}$ is also in $L^2_0(\Omega)$ (apply the divergence form of Green's theorem), thus each function is in $P_h$ by~\lemref{lem:discDiv}. 
It remains to show that if 
\begin{multline*}
    \sum_{j=|V_i^M|+1}^{|E_{int}^M|} a_j\nabla\cdot\bSym{\psi}_e^{(j)} + \sum_{j=1}^{|E_{int}^M|} b_j\nabla\cdot\bSym{\phi}_e^{(j)} + \sum_{k=1}^{|\tau_h^M|} c_k\nabla\cdot\bSym{\psi}_c^{(k)} + \sum_{k=1}^{|\tau_h^M|} d_k\nabla\cdot\bSym{\phi}_c^{(k)} = 0, 
\end{multline*} 
then the coefficients shown are all zero.  
Equivalently, $0 = \nabla\cdot \bSym{\phi}$ where 
\begin{equation*}
\bSym{\phi} = \sum_{j=|V_i^M|+1}^{|E_{int}^M|} a_j\bSym{\psi}_e^{(j)} + \sum_{j=1}^{|E_{int}^M|} b_j\bSym{\phi}_e^{(j)} + \sum_{k=1}^{|\tau_h^M|} c_k\bSym{\psi}_c^{(k)} + \sum_{k=1}^{|\tau_h^M|} d_k\bSym{\phi}_c^{(k)}.
\end{equation*}
Thus $\bSym{\phi} \in \bChar{V}_h^0$.  
Since $\bSym{\phi}$ is already expressed as a linear combination of basis functions, it suffices to show that $\bSym{\phi} = \bChar{0}$.  
The basis functions are non-zero at (specific) singular points and incenters but vanish at neighboring vertices, so $\bSym{\phi}(\bChar{z}) = \bChar{0}$ for any $\bChar{z}\in V^M$. 
Upon changing to the basis of $\bChar{V}_h^0$ in~\thmref{thm:Vh}, this latter property implies 
\[
\bSym{\phi} = \sum_{k=1}^{|V_i^M|} \beta_ k \bSym{\Phi}_3^{(k)} .
\]
Given any edge $e_j^M$ with $1\leq j\leq |V_i^M|$, note that 
\begin{align*}
\int_{e_j^M} \bSym{\psi}_e^{(m)}\cdot \bChar{n}_j\, ds &= 0 , \ |V_i^M|+1 \leq m \leq |E_{int}^M|, \\ 
\int_{e_j^M} \bSym{\phi}_e^{(m)}\cdot \bChar{n}_j\, ds &= 0 , \ 1 \leq m \leq |E_{int}^M|, \\ 
\int_{e_j^M} \bSym{\psi}_c^{(m)}\cdot \bChar{n}_j\, ds &= 0 , \ 1 \leq m \leq |\tau_h^M|, \\ 
\int_{e_j^M} \bSym{\phi}_c^{(m)}\cdot \bChar{n}_j\, ds &= 0 , \ 1 \leq m \leq |\tau_h^M|, \\ 
\Rightarrow  \int_{e_j^M} \bSym{\phi}\cdot \bChar{n}_j\, ds &= 0 
\end{align*} 
In turn, we have a square, homogeneous linear system of equations for the coefficients $\beta_k$: 
\begin{equation}
\sum_{k=1}^{|V_i^M|} \beta_ k \int_{e_j^M} \bSym{\Phi}_3^{(k)} \cdot \bChar{n}_j\, ds = \sum_{k=1}^{|V_i^M|} \beta_ k  \left( \pm \delta_{kj} \right)= 0, \ 1\leq j \leq |V_i^M|.  \label{eqn:thm2_1} 
\end{equation}
Recall that the edges $e_j^M$ form a spanning tree for the vertices $\{ \bChar{z}_0 , \ldots , \bChar{z}_{|V_i^M|} \}$.  
Each basis function $\bSym{\Phi}_3^{(k)}$ is associated with vertex $\bChar{z}_k$, and they can be put into one-to-one correspondence with the edges $e_j^M$, $1\leq j \leq |V_i^M|$, say $\bChar{z}_k$ is an endpoint of edge $e_{j_k}^M$.  
Since the labeling of interior vertices is arbitrary, for convenience we may assume that edge $e_{j_k}^M$ has endpoints $\bChar{z}_{k-1}$ and $\bChar{z}_k$.  
As $\bChar{z}_0$ lies on $\partial\Omega$, only basis function $\bSym{\Phi}_3^{(1)}$ is supported on edge $e_{j_1}^M$, so $\beta_1=0$ by~\eqref{eqn:thm2_1}.     
Inductively, given that $\beta_{k-1}=0$ we find that only $\bSym{\Phi}_3^{(k)}$ is supported on edge $e_{j_k}^M$, so $\beta_k=0$ also for $2\leq k\leq |V_i^M|$.  
Thus, $\bSym{\phi} = \bChar{0}$, completing the proof.  
\end{proof}

\section{Computations}\label{sec:comps} 
On the domain $\Omega = (0,1) \times (0,1) \subset \reals^2$ we will use the manufactured solution
\begin{equation}\label{manusoln}\bChar{u} = \begin{pmatrix} \sin(x)\cos(y) \\ -\cos(x)\sin(y) \end{pmatrix}, \quad p = xy - \frac{1}{4} 
\end{equation}
with parameter choice $\nu=1$.  The usual norms for the spaces $L^2 (\Omega)$ and $[H^1 (\Omega)]^2$ are denoted by $\| \cdot \|$ and $\|\cdot \|_1$, respectively.
Computations that use the solenoidal basis for velocity are labelled as SOL, as opposed to SP for the saddle-point formulation (without the solenoidal basis). 
All computations are done in MATLAB using the multifrontal direct solver package invoked by the command \texttt{mldivide}, commonly referred to as the ``backslash'' operator.  
The Choleski solver is called by \texttt{mldivide} for all SOL calculations in order to exploit the symmetric, positive definite matrix properties, whereas for the SP system the solver called is MA57.  The MA57 solver is optimized for sparse, symmetric indefinite systems. The macro-meshes $\tau_h^M$ were generated using code from \cite{gockenbach,yonkermesh}.  

\begin{figure}
\centering
\begin{overpic}[width=11.0cm]{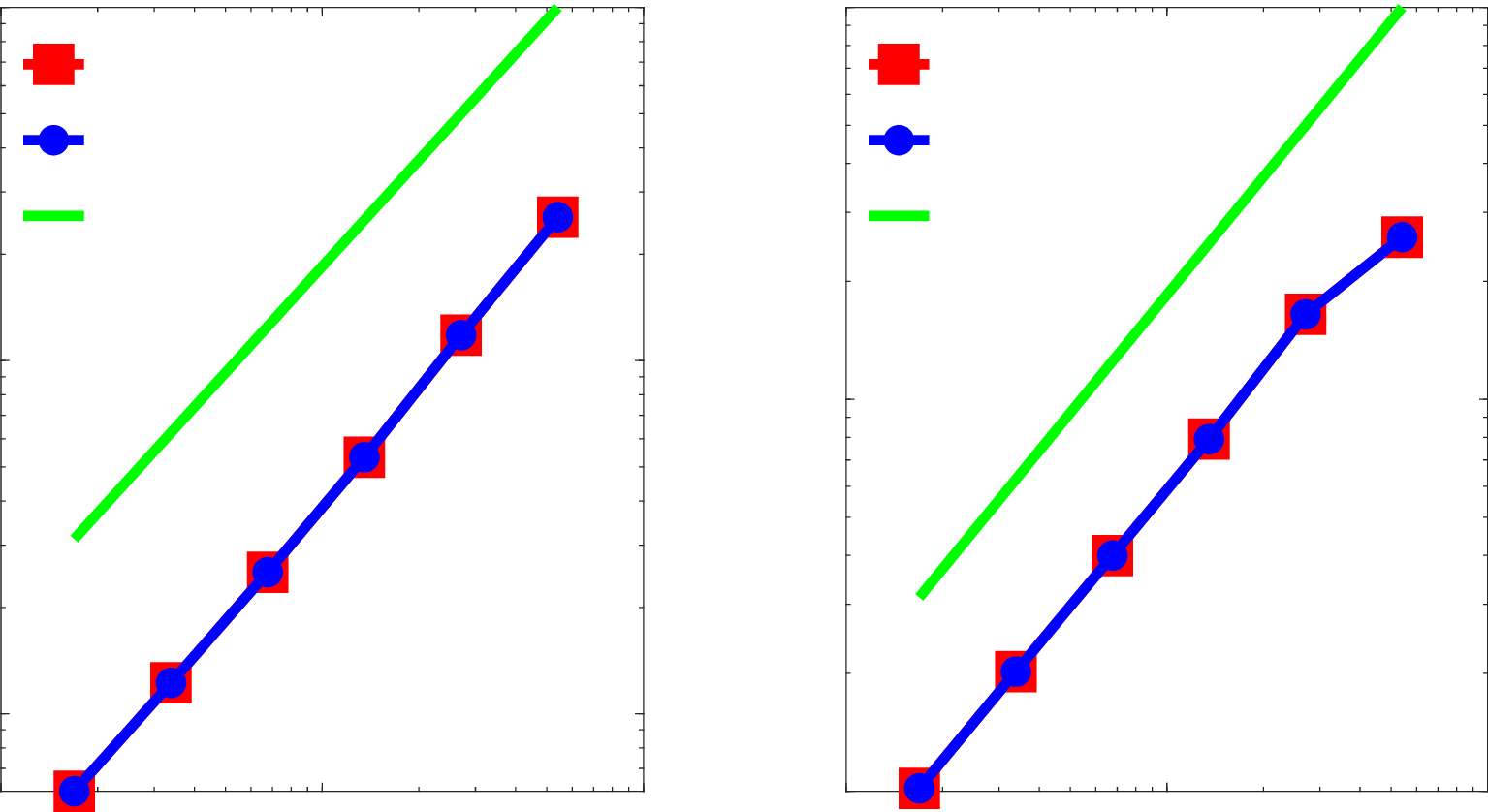}
\put(21,-7){$h$}
\put(-10,21){\rotatebox{90}{$||\bChar{u}-\bChar{u}_h||_1$}}
\put(-2,-2.5){\scriptsize $10^{-2}$}
\put(20,-2.5){\scriptsize $10^{-1}$}
\put(42,-2.5){\scriptsize $10^{0}$}
\put(73,-7){$h$}
\put(54,-2.5){\scriptsize $10^{-2}$}
\put(76,-2.5){\scriptsize $10^{-1}$}
\put(99,-2.5){\scriptsize $10^{0}$}
\put(-6,6){\scriptsize $10^{-2}$}
\put(51,1){\scriptsize $10^{-2}$}
\put(-6,29.5){\scriptsize $10^{-1}$}
\put(51,27){\scriptsize $10^{-1}$}
\put(-5,53.5){\scriptsize $10^{0}$}
\put(52,53.5){\scriptsize $10^{0}$}
\put(7,49.5){SOL}
\put(7,44){SP}
\put(7,39.25){$r=1$}
\put(63.25,49.5){SOL}
\put(63.25,44){SP}
\put(63.25,39.25){$r=1$}
\put(103,21){\rotatebox{90}{$||p-p_h||$}}
\end{overpic}
\vspace{6mm}
\caption{Velocity and pressure errors converge with the optimal scaling $\| \bChar{u} - \bChar{u}_h\|_1 = \mathcal{O}(h^r)$ and $\| p - p_h\| = \mathcal{O}(h^r)$ with $r=1$.  The SOL and SP velocity calculations agree to round-off precision, so the plotted data overlaps.}\label{fig:Error}
\end{figure}

Errors for velocity and pressure are shown in~\figref{fig:Error} in their respective norms. The horizontal axis shows the mesh size $h$ for the Powell-Sabin splits $\tau_h$. The optimal convergence rates were observed for both velocity and pressure variables.  Between SOL and SP, the computed states agreed to within the limits of round-off error.
We also observed (not shown) that the condition number of the matrix for the SOL velocity system was consistently less than 1\% of the corresponding SP condition number. 

\begin{figure}
\centering
\begin{overpic}[width=11.0cm]{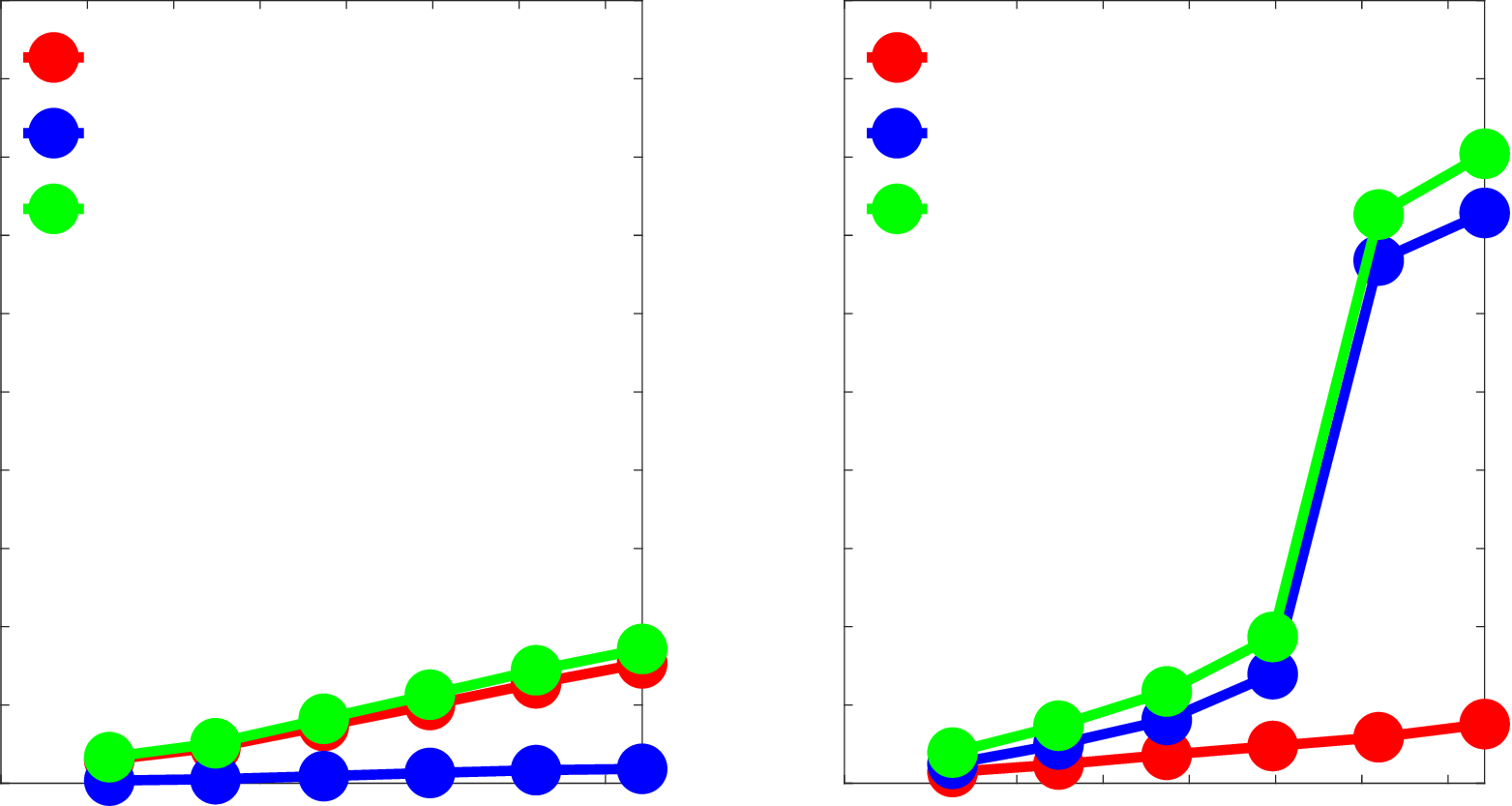}
\put(19,-7){$|V|$}
\put(-10,11){\rotatebox{90}{Velocity SOL $\Time$}}
\put(103,19){\rotatebox{90}{SP $\Time$}}
\put(76,-7){$|V|$}
\put(7,48.5){Assembly}
\put(7,43.5){Solve}
\put(7,38.5){Total}
\put(62,48.5){Assembly}
\put(62,43.5){Solve}
\put(62,38.5){Total}
\put(-0.5,-2.5){\scriptsize 0}
\put(4,-2.5){\scriptsize 0.5}
\put(10.75,-2.5){\scriptsize 1}
\put(15.25,-2.5){\scriptsize 1.5}
\put(22.25,-2.5){\scriptsize 2}
\put(27,-2.5){\scriptsize 2.5}
\put(33.75,-2.5){\scriptsize 3}
\put(38.5,-2.5){\scriptsize 3.5}
\put(39,-7){\scriptsize $\times 10^{4}$}
\put(55,-2.5){\scriptsize 0}
\put(60,-2.5){\scriptsize 0.5}
\put(66.5,-2.5){\scriptsize 1}
\put(71,-2.5){\scriptsize 1.5}
\put(78,-2.5){\scriptsize 2}
\put(82.75,-2.5){\scriptsize 2.5}
\put(89.5,-2.5){\scriptsize 3}
\put(94.25,-2.5){\scriptsize 3.5}
\put(95,-7){\scriptsize $\times 10^{4}$}
\put(-4,0.75){\scriptsize 0}
\put(-5,6){\scriptsize 0.2}
\put(-5,11.25){\scriptsize 0.4}
\put(-5,16.5){\scriptsize 0.6}
\put(-5,21.5){\scriptsize 0.8}
\put(-5,26.75){\scriptsize 1.0}
\put(-5,32){\scriptsize 1.2}
\put(-5,37){\scriptsize 1.4}
\put(-5,42.25){\scriptsize 1.6}
\put(-5,47.5){\scriptsize 1.8}
\put(-5,52.5){\scriptsize 2.0}
\put(52,0.75){\scriptsize 0}
\put(51,6){\scriptsize 0.2}
\put(51,11.25){\scriptsize 0.4}
\put(51,16.5){\scriptsize 0.6}
\put(51,21.5){\scriptsize 0.8}
\put(51,26.75){\scriptsize 1.0}
\put(51,32){\scriptsize 1.2}
\put(51,37){\scriptsize 1.4}
\put(51,42.25){\scriptsize 1.6}
\put(51,47.5){\scriptsize 1.8}
\put(51,52.5){\scriptsize 2.0}
\end{overpic}
\vspace{6mm}
\caption{Time trials comparing velocity computation with SOL versus SP.}
\label{fig:VelocitySOLvsSP}
\end{figure}

Run times were computed to assemble and to solve the systems.  
In~\figref{fig:VelocitySOLvsSP} we plot $\Time$ versus $|V|$ , where $|V|$ is the number of vertices in the Powell-Sabin mesh and $\Time$ is the averaged timing over 20 runs.
The timings for SOL are only for computing velocity.  
A jump up in solver time was observed on the two finest meshes for the SP case; output from the solver package indicated this was due to an increase in the cost to correctly factor the system.  
Although assembly times using SOL are significant, there is an overall time savings to compute velocity alone using SOL versus SP on fine meshes.  
Solver timings are much smaller for SOL even on fairly coarse meshes, which would translate to further efficiency gains for applications that require repeated solves without significant reassembly.  

\begin{figure}
\centering
\begin{overpic}[width=11.0cm]{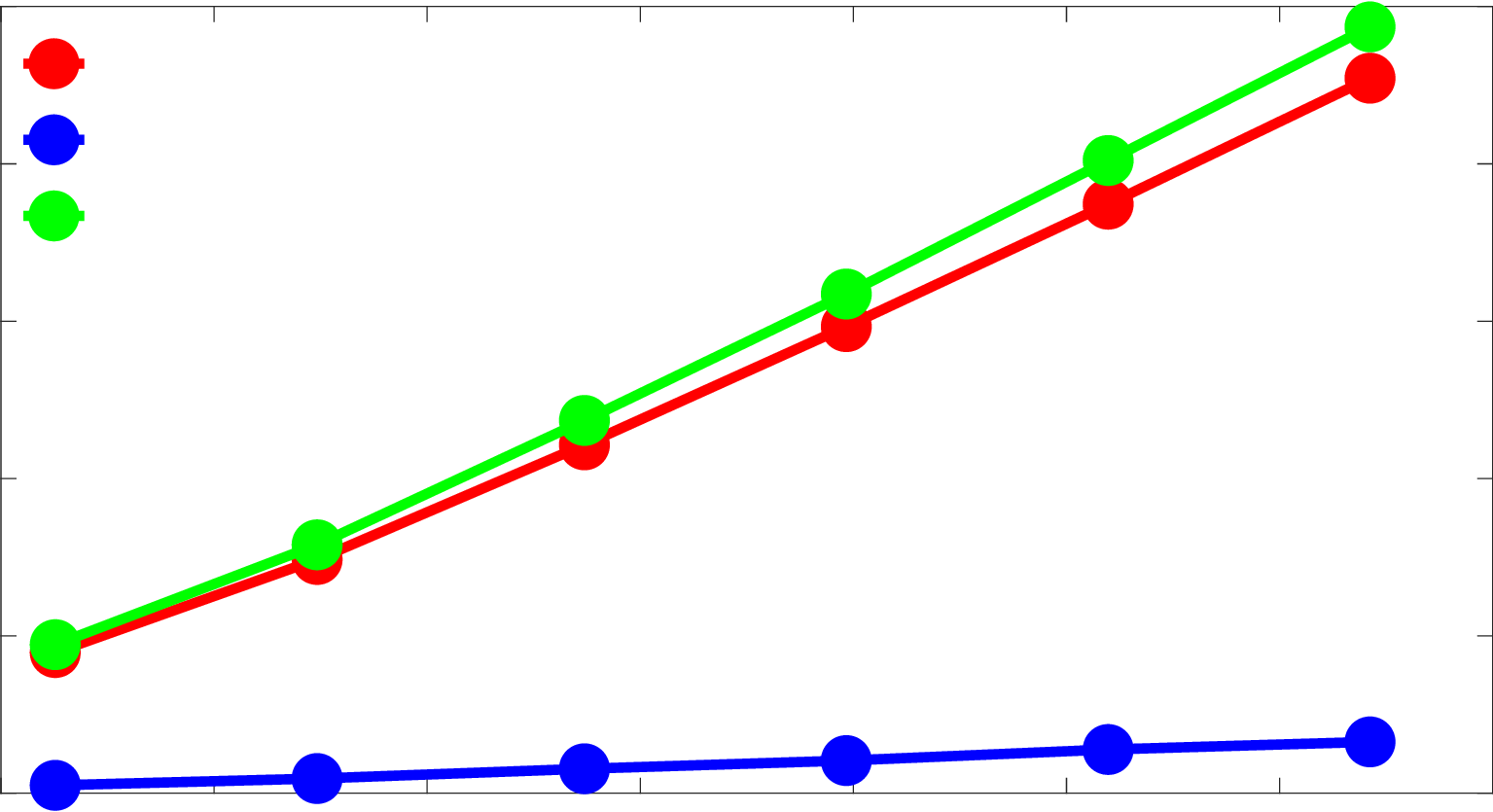}
\put(-10,12){\rotatebox{90}{Pressure SOL $\Time$}}
\put(50,-7){$|V|$}
\put(7,49){Assembly}
\put(7,44){Solve}
\put(7,39){Total}
\put(-3,0.5){\scriptsize 0}
\put(-5,11.25){\scriptsize 0.05}
\put(-4.5,21.75){\scriptsize 0.1}
\put(-5,32){\scriptsize 0.15}
\put(-4.5,43){\scriptsize 0.2}
\put(-5,53){\scriptsize 0.25}
\put(-2,-2.5){\scriptsize 0.5}
\put(13.75,-2.5){\scriptsize 1}
\put(27,-2.5){\scriptsize 1.5}
\put(42.25,-2.5){\scriptsize 2}
\put(55.5,-2.5){\scriptsize 2.5}
\put(70.75,-2.5){\scriptsize 3}
\put(84.25,-2.5){\scriptsize 3.5}
\put(99,-2.5){\scriptsize 4}
\put(99,-7){\scriptsize $\times 10^{4}$}
\end{overpic}
\vspace{6mm}
\caption{SOL Pressure Time}
\label{fig:PressureSOL}
\end{figure}

\begin{figure}
\centering
\begin{overpic}[width=11.0cm]{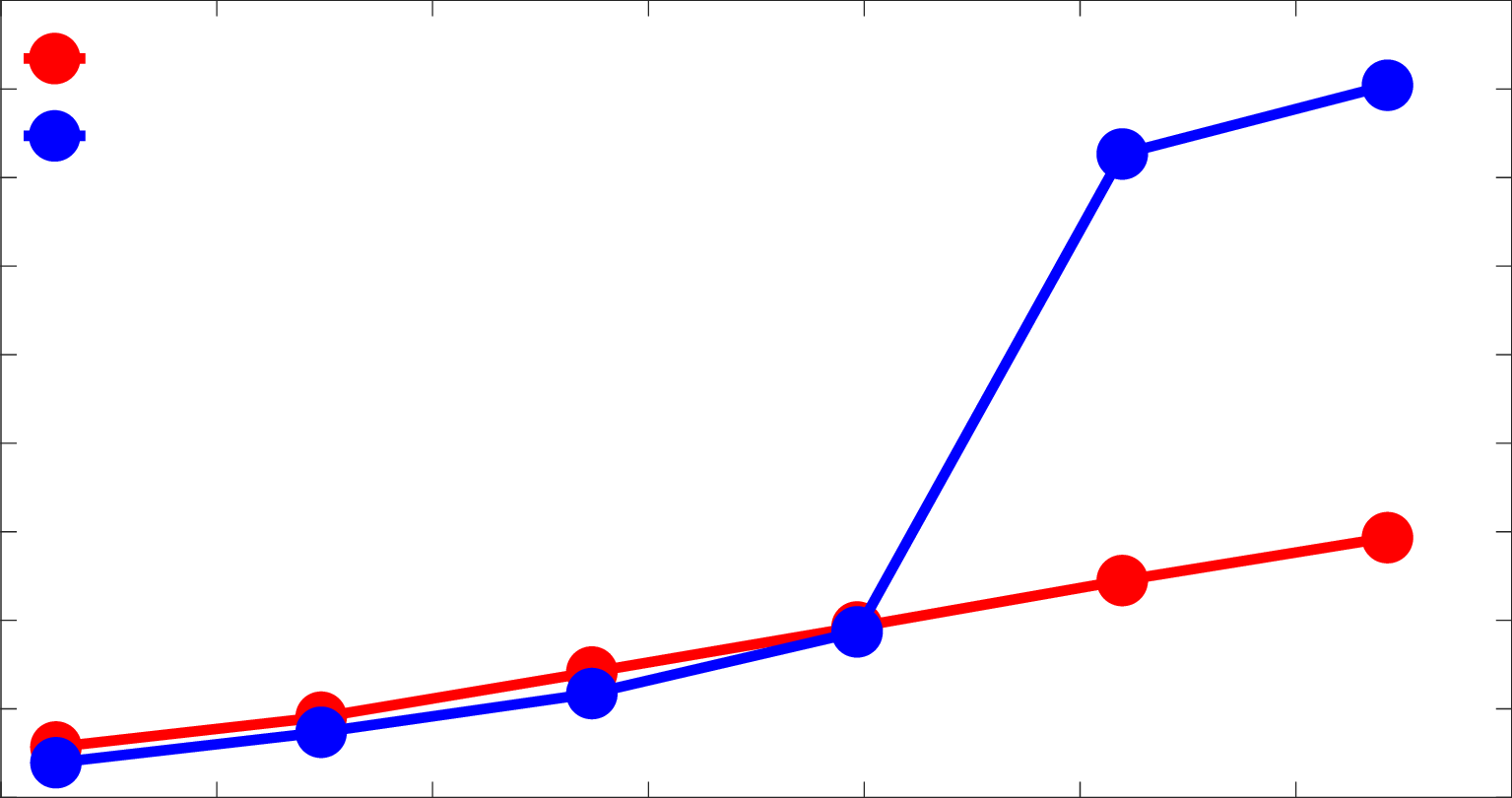}
\put(-10,16){\rotatebox{90}{Total $\Time$}}
\put(50,-7){$|V|$}
\put(7,48){SOL}
\put(7.5,42.5){SP}
\put(-4,-0.25){\scriptsize 0}
\put(-5,5.25){\scriptsize 0.2}
\put(-5,11){\scriptsize 0.4}
\put(-5,17){\scriptsize 0.6}
\put(-5,23){\scriptsize 0.8}
\put(-5,28.75){\scriptsize 1.0}
\put(-5,34.5){\scriptsize 1.2}
\put(-5,40.5){\scriptsize 1.4}
\put(-5,46.25){\scriptsize 1.6}
\put(-5,52){\scriptsize 1.8}
\put(-2,-3){\scriptsize 0.5}
\put(13.75,-3){\scriptsize 1}
\put(27,-3){\scriptsize 1.5}
\put(42.25,-3){\scriptsize 2}
\put(55.5,-3){\scriptsize 2.5}
\put(70.75,-3){\scriptsize 3}
\put(84.25,-3){\scriptsize 3.5}
\put(99,-3){\scriptsize 4}
\put(99,-7){\scriptsize $\times 10^{4}$}
\end{overpic}
\vspace{6mm}
\caption{Total time to assemble and solve for pressure and velocity.}
\label{fig:AllTime}
\end{figure}

Timings are displayed for the computation of pressure via the SOL method in~\figref{fig:PressureSOL}.  
These times are somewhat less than the corresponding times for the SOL velocity computations.  
The total time required to compute both velocity and pressure is compared in~\figref{fig:AllTime}.  
In conclusion, the SOL method supports a more efficient system solve on fairly coarse meshes, and may be used to compute velocity alone more quickly.  
Due to assembly costs, the SOL and SP methods are about even in computing the full velocity-pressure system until the mesh is fine enough, when SOL is again faster. 

\section{Summary}\label{sec:summary}
A solenoidal basis of a first-order polynomial velocity space was explicitly constructed in this paper for the conforming, divergence-free velocity-pressure finite element pair studied in~\cite{Fabien_2022} for the Stokes problem.    
These elements are defined on the Powell-Sabin split of a triangulation for a two-dimensional domain.  
Inhomogeneous Dirichlet conditions were enabled by constructing an interpolation operator for boundary data into the trace of the solenoidal velocity subspace. 
A basis of the pressure space was also derived using divergences of non-solenoidal functions from the full discrete velocity space.  
Its construction used a graph-theoretic argument to discard functions associated with a certain subset of vertices in the mesh, retaining a spanning set.  
The implementation can be accomplished by generating a spanning tree related to the mesh, for which purpose their is an efficient algorithm due to Kruskal~\cite{kruskal1956}.  
All basis functions were shown to have local support.  
Rigorous proofs were included that verify the construction of bases.  
Also, some implementation details for the solenoidal velocity basis were discussed that enable a localization of the computations to assemble the linear system.  

The use of the bases studied enables a block-triangular structure for the velocity-pressure system, wherein the velocity can be isolated and computed without computing the pressure.  
The computed velocity can then be used after to compute the pressure, if desired.  
The non-solenoidal component of the discrete velocity space is eliminated from the velocity computation, which further reduces the problem size.  

The system matrices for the velocity and pressure solves are both symmetric, positive-definite and support the use of the Choleski direct solver.  
Computations using MATLAB verified a significant decrease in time to compute velocity alone compared to solving the usual saddle-point system.  
On fine enough meshes, even computing both pressure and velocity together was faster using the solenoidal velocity basis.  

Other benefits are expected for situations not included in the scope of this paper.  
The conjugate gradient method and associated preconditioners could be applied for larger sized problems when a direct solve is not feasible.  
Also, for applications that require repeated linear solves with a reduced cost of system reassembly compared to the initial assembly, there could be a larger efficiency gain using the solenoidal basis than was demonstrated in this paper.   

\bibliography{refs}{}
\bibliographystyle{plain} 

\end{document}